\def\Z{\mathbb{Z}}
\def\N{\mathbb{N}}
\DeclareMathOperator{\PG}{PG}
\theoremstyle{definition}
\newtheorem{theorem}{Theorem}[section]
\newtheorem{lemma}[theorem]{Lemma}
\newtheorem{definition}[theorem]{Definition}
\newtheorem{remark}[theorem]{Remark}
\newtheorem{corollary}[theorem]{Corollary}
\newtheorem{notation}[theorem]{Notation}
\newtheorem{example}[theorem]{Example}
\newcommand{\comments}[1]{}
\author{Maarten De Boeck\footnote{Address: UGent, Department of Mathematics, Krijgslaan 281-S22, 9000 Gent, Flanders, Belgium. \newline Email address: mdeboeck@cage.ugent.be}}
\title{The largest Erd\H{o}s-Ko-Rado sets in $2-(v,k,1)$ designs}
\date{}
\begin{document}
\maketitle

\begin{abstract}
  An Erd\H{o}s-Ko-Rado set in a block design is a set of pairwise intersecting blocks. In this article we study Erd\H{o}s-Ko-Rado sets in $2-(v,k,1)$ designs, Steiner systems. The Steiner triple systems and other special classes are treated separately. For $k\geq4$, we prove that the largest Erd\H{o}s-Ko-Rado sets cannot be larger than a point-pencil if $r\geq k^{2}-3k+\frac{3}{4}\sqrt{k}+2$ and that the largest Erd\H{o}s-Ko-Rado sets are point-pencils if also $r\neq k^{2}-k+1$ and $(r,k)\neq(8,4)$. For unitals we also determine an upper bound on the size of the second-largest maximal Erd\H{o}s-Ko-Rado sets.
  \paragraph*{Keywords:} Erd\H{o}s-Ko-Rado set, block design, Steiner system, unital
  \paragraph*{MSC 2010 codes:} 05B05, 05B07, 51E10, 52C10
\end{abstract}

\section{Introduction}
\label{sec:introduction}


\subsection{Block designs}

\begin{definition}
  A $t-(v,k,\lambda)$ block design, $v>k>1$, $k\geq t\geq 1$, $\lambda>0$, is an incidence geometry $\mathcal{D}=(\mathcal{P},\mathcal{B},\mathcal{I})$ with incidence relation $\mathcal{I}$, such that $|\mathcal{P}|=v$, such that any element of $\mathcal{B}$ (blocks) is incident with $k$ elements of $\mathcal{P}$ (points) and such that any $t$ points are contained in $\lambda$ common blocks. A block can be identified with the $k$-subset of $\mathcal{P}$ which it determines.
\end{definition}

Block designs have been widely studied for many years, see for example \cite{ak,cvl,cd,dem,hp} for an overview.
\par The following counting results are widely known.

\begin{theorem}
  Let $\mathcal{D}=(\mathcal{P},\mathcal{B},\mathcal{I})$ be a $t-(v,k,\lambda)$ block design. Then,
  \begin{itemize}
    \item the number of blocks through an arbitrary set of $i$ points equals $\lambda_{i}=\lambda\binom{v-i}{t-i}/\binom{k-i}{t-i}$;
    \item in particular, the number of blocks through a fixed point equals $r=\lambda_{1}=\lambda\binom{v-1}{t-1}/\binom{k-1}{t-1}$;
    \item $b=|\mathcal{B}|=\frac{vr}{k}$.
  \end{itemize}
\end{theorem}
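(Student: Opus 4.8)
The plan is to prove all three statements by elementary double counting. For the first and most substantial item, fix an arbitrary set $S$ of $i$ points, where $0\le i\le t$, and count in two ways the number $N$ of pairs $(T,B)$ in which $T$ is a $t$-subset of $\mathcal{P}$ with $S\subseteq T$ and $B$ is a block with $T\subseteq B$. Counting first by the choice of $T$: there are $\binom{v-i}{t-i}$ ways to extend $S$ to a $t$-set $T$, and by the defining property of the design each such $T$ lies in exactly $\lambda$ blocks, so $N=\lambda\binom{v-i}{t-i}$. Counting instead by the block $B$: a block containing $S$ has $k-i$ further points, and any $t-i$ of them together with $S$ form a valid $T$, whereas a block not containing $S$ yields no such pair; hence $N$ equals the number of blocks through $S$ times $\binom{k-i}{t-i}$.

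Comparing the two expressions shows that the number of blocks through $S$ equals $\lambda\binom{v-i}{t-i}/\binom{k-i}{t-i}$. In particular this quantity does not depend on the chosen $i$-set $S$, which is precisely the assertion that $\lambda_i$ is a well-defined invariant of the design, and it has the stated value. Specialising to $i=1$ gives immediately the formula for $r=\lambda_1$.

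For the last item I would count flags, that is, incident point--block pairs $(P,B)$. Summing over points, each of the $v$ points lies in $r$ blocks, contributing $vr$ in total; summing over blocks, each of the $b$ blocks contains $k$ points, contributing $bk$ in total. Therefore $bk=vr$, so $b=vr/k$.

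I do not expect a genuine obstacle: the only points needing a moment's care are the boundary cases (taking $i=t$ recovers $\lambda_t=\lambda$, and $i=0$ recovers $b=\lambda\binom{v}{t}/\binom{k}{t}$, both consistent with the above) and the remark --- already supplied by the double count itself --- that the counts are independent of the particular set of points chosen, so that $\lambda_i$ and $r$ are genuine constants of the design.
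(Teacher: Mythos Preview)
Your argument is correct and is exactly the standard double-counting proof of these elementary design-theoretic identities. The paper itself does not give a proof of this theorem at all: it is stated as a well-known fact (``The following counting results are widely known'') and left unproved, so there is nothing to compare against beyond noting that your approach is the canonical one.
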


The most studied class of block designs are the $2-(v,k,1)$ designs, which are called Steiner systems. Among them we mention especially the $2-(n^{2}+n+1,n+1,1)$ designs (the projective planes of order $n$), $n\geq2$, the $2-(n^{2},n,1)$ designs (the affine planes of order $n$), $n\geq3$, and the $2-(n^{3}+1,n+1,1)$ designs (the unitals of order $n$), $n\geq2$.
\par By the above results, a $2-(v,k,1)$ design contains $b=\frac{v(v-1)}{k(k-1)}$ blocks, $r=\frac{v-1}{k-1}$ of them through a fixed point. Note that a $2-(v,k,1)$ design can only exist if $v\equiv 1\pmod{k-1}$ and $k(k-1)\mid v(v-1)$.

\subsection{\texorpdfstring{Erd\H{o}s}{Erdos}-Ko-Rado theorems}

In 1961, the original Erd\H{o}s-Ko-Rado theorem solved a problem in extremal combinatorics.

\begin{theorem}[\cite{ekr}]
  Let $\Omega$ be a set of size $n$ and $\mathcal{S}$ a family of subsets of size $k$ such that the elements of $\mathcal{S}$ are pairwise not disjoint. If $n\geq2k$, then $|\mathcal{S}|\leq\binom{n-1}{k-1}$. If $n\geq N_{0}(k)$, then equality holds if and only if $\mathcal{S}$ is the set of all subsets of size $k$ containing a fixed element of $\Omega$.
\end{theorem}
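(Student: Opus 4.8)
The plan is to establish the cardinality bound by Katona's cyclic permutation (double counting) argument, and to deduce the characterisation of the extremal families by the shifting (compression) technique.

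For the bound I would fix a cyclic ordering of $\Omega$, that is, an identification of $\Omega$ with $\Z_{n}$, and call a set of $k$ cyclically consecutive elements an \emph{arc} of that ordering. The key lemma is that, for any fixed cyclic ordering, at most $k$ of its $n$ arcs can belong to $\mathcal{S}$: if an arc $A$ lies in $\mathcal{S}$, then every other member of $\mathcal{S}$ that is an arc must meet $A$, and the $2(k-1)$ arcs other than $A$ that meet $A$ split into $k-1$ pairs, each consisting of two disjoint arcs (here $n\geq 2k$ is used), so at most one arc from each pair can lie in $\mathcal{S}$, giving at most $1+(k-1)=k$ arcs in total. I would then double count the pairs $(\pi,S)$ with $\pi$ a cyclic ordering of $\Omega$ and $S\in\mathcal{S}$ an arc of $\pi$. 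There are $(n-1)!$ cyclic orderings, each contributing at most $k$ such pairs by the lemma, while (since $k<n$ here) each fixed $k$-set is an arc of exactly $k!\,(n-k)!$ cyclic orderings. Comparing the two counts gives $|\mathcal{S}|\,k!\,(n-k)!\leq (n-1)!\,k$, i.e.\ $|\mathcal{S}|\leq\binom{n-1}{k-1}$.

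For the characterisation I would use compression: for $i<j$, replace every $A\in\mathcal{S}$ with $j\in A$, $i\notin A$ by $(A\setminus\{j\})\cup\{i\}$ whenever the latter set is not already in the family. Such a shift leaves $|\mathcal{S}|$ unchanged and preserves the intersecting property, and after finitely many shifts one reaches a family $\mathcal{S}'$ invariant under all shifts. One then argues that a shift-invariant intersecting family of $k$-sets with $|\mathcal{S}'|=\binom{n-1}{k-1}$ must, for $n$ large enough, be a point-pencil (all $k$-sets through a common element), and finally runs the shifts backwards, using that equality forces every intermediate family to remain extremal, to conclude that $\mathcal{S}$ itself was a point-pencil. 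One can take $N_{0}(k)=2k+1$, since for $n=2k$ there are additional extremal families (e.g.\ choosing one set from each complementary pair).

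The main obstacle is the characterisation of equality, not the bound: the backward analysis of the shifts, together with the proof that a compressed extremal family is a point-pencil, needs care, and it is precisely here that one must be able to afford $n$ somewhat larger than $2k$ so as to exclude the sporadic extremal configurations (the quantitatively sharp statement being the Hilton--Milner theorem). Since the present article uses this theorem only as an ingredient, in practice I would simply invoke \cite{ekr}, together with the Hilton--Milner theorem for the admissible value of $N_{0}(k)$.
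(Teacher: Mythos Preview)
The paper does not supply a proof of this theorem: it is quoted verbatim as a background result from \cite{ekr}, and the subsequent sentence attributing the sharp value $N_{0}(k)=2k+1$ to Wilson \cite{w} confirms that the author is merely citing the literature. Your concluding remark, that in the context of this article one would simply invoke \cite{ekr} (and Hilton--Milner for the equality case), is therefore exactly what the paper does.

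That said, your outlined argument is correct and standard. Katona's cycle method gives the bound precisely as you describe: the pairing of the $2(k-1)$ arcs meeting a given arc into $k-1$ disjoint pairs does require $n\geq 2k$, and the double count $|\mathcal{S}|\,k!\,(n-k)!\leq k\,(n-1)!$ yields $|\mathcal{S}|\leq\binom{n-1}{k-1}$. The compression approach to the equality characterisation is also standard; your caveat that the backward step through the shifts is the delicate part, and that one needs $n\geq 2k+1$ to rule out the non-pencil extremal families at $n=2k$, is accurate. Since there is no proof in the paper to compare against, there is nothing further to contrast.
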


In 1984, Wilson showed that the bound $n\geq 2k+1$ is both sufficient and necessary for the above classification: the families $\mathcal{S}$ meeting the upper bound are sets of all subsets of size $k$ containing a fixed element of $\Omega$ (\cite{w}).

Many generalisations of this problem have been investigated. The set $\Omega$ has often been replaced by a geometry, such as a vector space or a polar space, simultaneously replacing the subsets by subspaces. In general, an {\em Erd\H{o}s-Ko-Rado set} is a set of subsets (subspaces of fixed dimension) which are pairwise non-disjoint. It is called {\em maximal} if it can not be extended to a larger Erd\H{o}s-Ko-Rado set. Hence, an Erd\H{o}s-Ko-Rado set on a design is a set of pairwise intersecting blocks. The Erd\H{o}s-Ko-Rado problem asks for the classification of the (largest) Erd\H{o}s-Ko-Rado sets.
\par In \cite[Section 2]{bbsw} and \cite{ds}, surveys of Erd\H{o}s-Ko-Rado theorems in geometrical settings can be found. Recent results on Erd\H{o}s-Ko-Rado sets in projective and polar spaces can be found in e.g. \cite{bbc,bbs,deb,im,phdnewman,psv,tan}.
\par The most important type of Erd\H{o}s-Ko-Rado sets are the sets of all subsets (blocks, subspaces, ...) through a fixed point. They are called {\em point-pencils}. In a block design a point-pencil is a maximal Erd\H{o}s-Ko-Rado set if $r>k$.
\par For general block designs, the following Erd\H{o}s-Ko-Rado result was obtained by Rands.

\begin{theorem}[\cite{ran}]
  Let $\mathcal{D}=(\mathcal{P},\mathcal{B},\mathcal{I})$ be a $t-(v,k,\lambda)$ block design and let $\mathcal{S}$ be a subset of $\mathcal{B}$ such that the blocks of $\mathcal{S}$ have pairwise at least $s$ points in common, $0<s<t\leq k$.
  \begin{itemize}
    \item If $s<t-1$ and $v\geq s+\binom{k}{s}(k-s+1)(k-s)$, or
    \item if $s=t-1$ and $v\geq s+\binom{k}{s}^{2}(k-s)$,
  \end{itemize}
  then $|\mathcal{S}|\leq\lambda_{s}$ and equality is obtained if and only if $\mathcal{S}$ is the set of blocks through $s$ fixed points.
\end{theorem}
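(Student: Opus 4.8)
The plan is to run a dichotomy on whether the blocks of $\mathcal S$ share a common $s$-set, and in the negative case to bound $|\mathcal S|$ by a quantity whose growth in $v$ is one order lower than that of $\lambda_s$, so that the hypothesis on $v$ closes the gap. The one counting fact I would isolate at the outset is the identity $\lambda_s=\frac{v-s}{k-s}\,\lambda_{s+1}$, valid since $0<s<t$ (both indices are legitimate). This already exhibits the leverage: $\lambda_s$ is essentially $\frac{v}{k-s}$ times $\lambda_{s+1}$.

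\emph{Easy case.} Suppose $\bigcap_{B\in\mathcal S}B$ contains an $s$-subset $T$. Then every block of $\mathcal S$ passes through the $s$ points of $T$, so $|\mathcal S|\le\lambda_s$, with equality precisely when $\mathcal S$ is the full set of blocks through $T$; this is exactly the asserted conclusion. So it remains to show that the complementary case cannot produce $|\mathcal S|=\lambda_s$, and indeed forces $|\mathcal S|<\lambda_s$.

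\emph{Hard case.} Suppose $|\bigcap_{B\in\mathcal S}B|<s$. Fix $B_0\in\mathcal S$. Each $B\in\mathcal S$ meets $B_0$ in at least $s$ points, hence contains one of the $\binom ks$ $s$-subsets of $B_0$, so with $\mathcal S^{(T)}=\{B\in\mathcal S:T\subseteq B\}$ we have $\mathcal S=\bigcup_T\mathcal S^{(T)}$ and $|\mathcal S|\le\sum_T|\mathcal S^{(T)}|$. For each $s$-subset $T$ of $B_0$ the failure of a common core gives a block $B'_T\in\mathcal S$ with $T\not\subseteq B'_T$; since $T\subseteq B$ and $|B\cap B'_T|\ge s>|T\cap B'_T|$, every $B\in\mathcal S^{(T)}$ meets $B'_T\setminus T$, so $B$ contains an $(s+1)$-set $T\cup\{q\}$ with $q\in B'_T\setminus T$. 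Counting blocks through each such $(s+1)$-set yields $|\mathcal S^{(T)}|\le|B'_T\setminus T|\cdot\lambda_{s+1}$; choosing the auxiliary blocks carefully (see below) one arranges $|B'_T\setminus T|\le k-s+1$, whence $|\mathcal S|\le\binom ks(k-s+1)\lambda_{s+1}$. Combined with $\lambda_s=\frac{v-s}{k-s}\lambda_{s+1}$, the hypothesis $v\ge s+\binom ks(k-s+1)(k-s)$ gives $|\mathcal S|\le\lambda_s$, and examining when every inequality in the chain is tight (the $\mathcal S^{(T)}$ essentially disjoint, each of maximal size, $v$ at the boundary value) shows this forces a common $s$-set, contrary to the hard case; hence $|\mathcal S|<\lambda_s$ here, finishing the classification.

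\emph{Main obstacle and the two regimes.} The delicate points are (i) producing, for every relevant $s$-set $T$, an auxiliary block of $\mathcal S$ omitting only one point of $T$ — an arbitrary $B'_T$ need not do this, and one instead exploits the minimality of a suitably chosen intersecting pair of blocks of $\mathcal S$ — and (ii) the equality analysis at the boundary value of $v$, required for the "only if." This is precisely where the distinction $s<t-1$ versus $s=t-1$ matters: when $s<t-1$ one has $\lambda_{s+1}=\lambda\binom{v-s-1}{t-s-1}/\binom{k-s-1}{t-s-1}$, a polynomial in $v$ of degree $t-s-1\ge 1$, leaving enough slack to push the refinement through to the stated threshold; when $s=t-1$ one only has $\lambda_{s+1}=\lambda_t=\lambda$, a constant, so the higher-index counting numbers are useless and $|\mathcal S^{(T)}|$ must be bounded by a purely combinatorial argument (intersecting with a bounded sub-family of $\mathcal S$), which costs a factor $\binom ks$ in place of $k-s+1$ — affordable since $\binom ks\ge k\ge k-s+1$ for $1\le s\le k-1$ — and yields the slightly larger bound $v\ge s+\binom ks^2(k-s)$. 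I expect step (i), getting the constant sharp enough for the equality statement in both regimes, to be where the real work lies.
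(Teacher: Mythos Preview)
This theorem is quoted from Rands and is not proved in the present paper; it is stated purely as a background result and then specialized to $2$-$(v,k,1)$ designs in the subsequent corollary. There is therefore no proof in the paper against which to compare your proposal.

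For what it is worth, your outline follows the standard strategy one would expect for such a result --- the dichotomy on a common $s$-core, the pigeonholing over $s$-subsets of a fixed block, and the comparison $\lambda_s=\frac{v-s}{k-s}\lambda_{s+1}$ --- and you have correctly identified the two genuinely nontrivial points: arranging that the auxiliary block $B'_T$ omits exactly one point of $T$ (note that $|B'_T\setminus T|\ge k-s+1$ whenever $T\not\subseteq B'_T$, so you need equality here, not merely an upper bound), and the equality analysis at the threshold value of $v$. But as written it is a plan rather than a proof: both the construction of $B'_T$ and the entire $s=t-1$ branch are left at the level of intent, so the proposal cannot be assessed as a complete argument.
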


For an Erd\H{o}s-Ko-Rado set in a $2-(v,k,1)$ design, this implies the following corollary.

\begin{corollary}\label{partialgeometry}
  Let $\mathcal{D}$ be a $2-(v,k,1)$ block design and let $\mathcal{S}$ be an Erd\H{o}s-Ko-Rado set of $\mathcal{D}$, $k\geq 2$. If $v\geq1+k^{2}(k-1)$, then $|\mathcal{S}|\leq r$ and $|\mathcal{S}|=r$ if and only if $\mathcal{S}$ is a point-pencil.
\end{corollary}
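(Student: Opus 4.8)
The plan is to derive the statement as an immediate specialisation of Rands' theorem (\cite{ran}) to the parameters $t=2$, $s=1$, $\lambda=1$. First I would record the elementary observation that in a $2-(v,k,1)$ design any two distinct blocks meet in at most one point, since any two points lie on a unique common block; consequently an Erd\H{o}s-Ko-Rado set $\mathcal{S}$ of $\mathcal{D}$ is exactly a family of blocks that pairwise meet in (precisely) one point, i.e.\ a family whose members pairwise share at least $s=1$ point. The parameter constraints of Rands' theorem, namely $0<s<t\leq k$, then read $0<1<2\leq k$, which holds because $k\geq 2$.

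Since here $s=t-1$, the applicable hypothesis in Rands' theorem is the second one, $v\geq s+\binom{k}{s}^{2}(k-s)=1+\binom{k}{1}^{2}(k-1)=1+k^{2}(k-1)$, which is exactly the assumption of the corollary. Rands' theorem then gives $|\mathcal{S}|\leq\lambda_{s}=\lambda_{1}$, with equality if and only if $\mathcal{S}$ is the set of all blocks through a single fixed point, that is, a point-pencil. It only remains to identify $\lambda_{1}$ with $r$: by the counting theorem quoted above, $\lambda_{1}=r=\frac{v-1}{k-1}$ in a $2-(v,k,1)$ design, so the conclusion $|\mathcal{S}|\leq r$ with equality characterising point-pencils follows.

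There is essentially no obstacle here: the corollary is a direct translation of a quoted result. The only places meriting a line of justification are (i) the reduction of ``pairwise intersecting'' to ``pairwise at least $s=1$ point in common'', which is clean precisely because distinct blocks of a $2-(v,k,1)$ design cannot meet in two or more points, and (ii) the bookkeeping that the numerical threshold in Rands' theorem with $(t,s,\lambda)=(2,1,1)$ is $1+k^{2}(k-1)$ and that $\lambda_{1}=r$. Both are routine, so I would present the proof as a one-paragraph application of Rands' theorem.
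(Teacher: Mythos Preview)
Your proposal is correct and is precisely the argument the paper intends: the corollary is stated immediately after Rands' theorem with the sentence ``this implies the following corollary'', and your specialisation $(t,s,\lambda)=(2,1,1)$ with the check that $s+\binom{k}{s}^{2}(k-s)=1+k^{2}(k-1)$ and $\lambda_{1}=r$ is exactly that implication. One minor remark: your point (i) is not even needed, since ``pairwise non-disjoint'' already means ``pairwise at least $s=1$ common point'' without invoking that distinct blocks meet in at most one point.
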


In the same article (\cite{ran}), it is claimed that the bound $v\geq1+k^{2}(k-1)$ can be improved to $v>k^{3}-2k^{2}+2k$, but there is no proof of this statement. However, it is shown that the bound $v> k^{3}-2k^{2}+2k$ is sharp. If $v=k^{3}-2k^{2}+2k$ and $k-1$ is a prime power, the $2-(v,k,1)$ design consisting of the points and lines of $\PG(3,k-1)$ contains two different types of Erd\H{o}s-Ko-Rado sets of size $r=k^{2}-k+1$: the set of all blocks through a fixed point and the set of blocks arising from the set of lines in a fixed plane.
\par In this article, we will prove the result about the bound $v>k^{3}-2k^{2}+2k$ (see Theorem \ref{rands}). It follows from two easy observations. The main part of this paper is devoted to the investigation of $2-(v,k,1)$ designs with $v<k^{3}-2k^{2}+2k$ (see Theorem \ref{maintheorem}). It turns out that $v=k^{3}-2k^{2}+2k$ is an isolated case. For $2-(v,k,1)$ designs with $v$ smaller than $k^{3}-2k^{2}+2k$ but not much, the largest Erd\H{o}s-Ko-Rado sets are also point-pencils. The results are summarized in Theorem \ref{class3} and Corollary \ref{overview}.



\section{Some special Steiner systems}

\begin{remark}
  Let $\mathcal{D}$ be a $2-(v,k,1)$ design. For every point $P$ in $\mathcal{D}$, there is a block not containing this point since $v>k$. Each of the points on this block determines a different block through $P$. Hence, $r\geq k$. If $r=k$, then $\mathcal{D}$ is a projective plane of order $k-1$; if $r=k+1$, then $\mathcal{D}$ is an affine plane of order $k$. So, the projective and affine planes are the two `smallest' $2-(v,k,1)$ designs.
\end{remark}

We look at the projective and affine planes is detail.

\begin{remark}
  In a projective plane, every two blocks have a point in common. Hence, in a projective plane there is only one maximal Erd\H{o}s-Ko-Rado set of blocks, namely the set of all blocks. Recall that we mentioned in the introduction that a point-pencil is only maximal if $r>k$.
\end{remark}

\begin{remark}\label{affine}
  In an affine plane of order $n$, the set of blocks can be partitioned in $n+1$ classes of $n$ blocks, such that the blocks in the same class pairwise have no point in common. These are commonly called parallel classes. Two blocks of different classes always meet in a point. An Erd\H{o}s-Ko-Rado set contains necessarily at most one block of each parallel class. A maximal Erd\H{o}s-Ko-Rado set contains precisely one block of each parallel class. Consequently, every maximal Erd\H{o}s-Ko-Rado set contains $n+1$ blocks.
  \par It should be noted that not all these maximal Erd\H{o}s-Ko-Rado sets are isomorphic. Also note that the point-pencil can be described in this way.
\end{remark}

Now we turn our attention to $2-(v,k,1)$ designs with a special property.

\begin{definition}
  The {\em O'Nan configuration} in a design $\mathcal{D}$ is a set of four blocks, pairwise non-disjoint, such that no three contain a common point.
\end{definition}

We will show that we can find a complete classification of the maximal Erd\H{o}s-Ko-Rado sets on designs not containing an O'Nan configuration. Note that all projective planes and all affine planes of order at least $3$ do contain O'Nan configurations.

\par We already know the point-pencil, a maximal Erd\H{o}s-Ko-Rado set of size $r$. We now give an example of a maximal Erd\H{o}s-Ko-Rado set on a design without an O'Nan configuration.

\begin{example}
  Let $\mathcal{D}$ be a $2-(v,k,1)$ design without an O'Nan configuration. Let $P$ be a point and let $B$ be a block of $\mathcal{D}$ such that $P\notin B$. Let $\mathcal{S}$ be the union of $\{B\}$ and the set of all blocks through $P$ meeting $B$. It is obvious that all blocks of $\mathcal{S}$ meet each other, hence that $\mathcal{S}$ is an Erd\H{o}s-Ko-Rado set. We call it the {\em triangle}. It contains $k+1$ blocks. We prove that it is maximal.
  \par Let $L$ be a block of $\mathcal{D}$ not in $\mathcal{S}$, meeting all blocks of $\mathcal{S}$. The block $L$ cannot pass through $P$, hence meets all blocks of $\mathcal{S}$ through $P$ in a different point. Since $L\neq B$, we know $k\geq3$. Let $P'$ and $P''$ be two points on $B\setminus\{L\cap B\}$ and let $B'$ and $B''$ be the blocks of $\mathcal{S}$ through $P$, respectively meeting $B$ in the points $P'$ and $P''$. Then the blocks $B$, $L$, $B'$ and $B''$ determine an O'Nan configuration, a contradiction.
\end{example}

\begin{theorem}\label{classONan}
  Let $\mathcal{D}$ be a $2-(v,k,1)$ design without an O'Nan configuration and let $\mathcal{S}$ be a maximal Erd\H{o}s-Ko-Rado set on $\mathcal{D}$. Then, $\mathcal{S}$ is a point-pencil or a triangle.
\end{theorem}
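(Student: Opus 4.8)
The plan is to take a maximal Erdős-Ko-Rado set $\mathcal{S}$ and split into two cases according to whether the blocks of $\mathcal{S}$ all pass through a common point. If they do, then since $\mathcal{S}$ is maximal and a point-pencil is a maximal EKR set (as $r>k$, because $\mathcal{D}$ contains a block off any point), $\mathcal{S}$ must be exactly that point-pencil and we are done. So assume from now on that no point lies on all blocks of $\mathcal{S}$; I want to force $\mathcal{S}$ to be a triangle.

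First I would record the following local structure coming from the absence of an O'Nan configuration: if three blocks of $\mathcal{D}$ are pairwise intersecting but do \emph{not} share a common point, they form a ``dual triangle'' with three distinct pairwise intersection points. The key claim is then: \emph{any four pairwise intersecting blocks of $\mathcal{D}$ must have a common point, or three of them already have a common point.} This is just the contrapositive of the O'Nan-freeness. I would use this to show that if $\mathcal{S}$ is not a point-pencil, then there exist two blocks $B, B_1 \in \mathcal{S}$ with $B\cap B_1 = \{Q\}$ and some $B_2 \in \mathcal{S}$ with $Q\notin B_2$. Now pick any point $P \in B_1 \cap B_2$ (nonempty since $\mathcal{S}$ is EKR); note $P \neq Q$. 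The strategy is to prove that $P$ lies on every block of $\mathcal{S}$ except $B$, and that $B$ is the unique block of $\mathcal{S}$ missing $P$; together with maximality (via the triangle example being maximal, or directly) this identifies $\mathcal{S}$ as the triangle determined by $P$ and $B$.

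To prove $P$ is on all blocks of $\mathcal{S}\setminus\{B\}$: suppose some $C \in \mathcal{S}$, $C\neq B$, misses $P$. I would examine the blocks $B, B_1, B_2, C$ and more generally apply O'Nan-freeness repeatedly. The main engine is: given a block $B$ and a point $P\notin B$, among the blocks through $P$ meeting $B$, O'Nan-freeness severely restricts how a further block $C\notni P$ can meet them — essentially $C$ must pass through the intersection point of two of them with $B$, which (with $k\geq 3$, forcing at least two choices of such pairs) collapses to a contradiction unless $C=B$. This is exactly the argument already used in the maximality proof of the triangle example, so I would factor it out as a lemma: \emph{if $B$ is a block, $P\notin B$, and $C\neq B$ is a block with $P\notin C$ meeting every block through $P$ that meets $B$, then $\mathcal{D}$ has an O'Nan configuration.} Applying this with $\mathcal{S}$ in the role of ``blocks through $P$ meeting $B$'' (after first checking every block of $\mathcal{S}\setminus\{B\}$ passes through $P$) finishes it. Finally, $B$ really is in $\mathcal{S}$ and really does miss $P$, so $\mathcal{S}$ is a triangle, and maximality/size considerations ($|\mathcal{S}| = k+1$) are automatic.

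The main obstacle is the bookkeeping in the second case: one must first establish that \emph{all} of $\mathcal{S}\setminus\{B\}$ passes through a single point $P$ before invoking the triangle-maximality lemma, and a priori $\mathcal{S}\setminus\{B\}$ could itself fail to be concurrent. I would handle this by a minimal-counterexample / extremal choice: among all non-concurrent maximal $\mathcal{S}$, and among all pairs $B,B_1\in\mathcal{S}$ meeting in a single point $Q$, iterate the O'Nan-freeness argument to show that the set of blocks of $\mathcal{S}$ avoiding $Q$ must itself be concurrent through a point $P$, then swap the roles of $Q$ and $P$ and repeat to get that \emph{everything} except one block passes through $P$. Checking that this iteration terminates correctly, and that $k\geq 3$ is available throughout (which it is, since a $2$-$(v,k,1)$ design with $k=2$ is a complete graph and every pair of blocks meeting would force $v\le 3$, contradicting $v>k$ together with maximality being nontrivial — or simply: $k=2$ designs trivially have no O'Nan configuration and the statement must be checked or excluded separately), is where the real care is needed.
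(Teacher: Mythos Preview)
Your approach has the right ingredients but a genuine organizational gap. You fix $B, B_1, B_2$ with $B\cap B_1=\{Q\}$, $Q\notin B_2$, set $P=B_1\cap B_2$, and aim to show every block of $\mathcal{S}\setminus\{B\}$ passes through $P$. But this depends on which block you happened to call $B$: in an actual triangle with apex $P^{*}$ and base $B^{*}$, if you pick two apex-blocks $B=L_1$, $B_1=L_2$ (so $Q=P^{*}$) and then necessarily $B_2=B^{*}$, your $P=L_2\cap B^{*}$ is a base point, and $L_3\in\mathcal{S}\setminus\{B\}$ does \emph{not} pass through it. So the claim is simply false for that choice, and the ``iteration'' you propose to repair the choice is not spelled out; it is precisely the missing content of the proof. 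You yourself flag the circularity, but never close it.

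The paper avoids the issue with a symmetric setup. Since $\mathcal{S}$ is not a pencil, take three blocks $B_1,B_2,B_3\in\mathcal{S}$ with no common point and set $P_i=B_j\cap B_\ell$ for $\{i,j,\ell\}=\{1,2,3\}$. Any $B\in\mathcal{S}$ meets all three $B_i$, so O'Nan-freeness applied to $\{B,B_1,B_2,B_3\}$ forces $B$ through one of $P_1,P_2,P_3$. A second application shows that if extra blocks $B'_i,B'_j\in\mathcal{S}\setminus\{B_1,B_2,B_3\}$ went through distinct vertices $P_i,P_j$, then $\{B_i,B_j,B'_i,B'_j\}$ would be an O'Nan configuration; hence all of $\mathcal{S}\setminus\{B_1,B_2,B_3\}$ passes through a single $P_i$, and maximality makes $\mathcal{S}$ the triangle on $(P_i,B_i)$. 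Two clean invocations of the hypothesis, no iteration needed.
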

\begin{proof}
  Assume that $\mathcal{S}$ is not a point-pencil; then we can find three blocks in $\mathcal{S}$, say $B_{1}$, $B_{2}$ and $B_{3}$, not through a common point. Denote $P_{1}=B_{2}\cap B_{3}$, $P_{2}=B_{3}\cap B_{1}$ and $P_{3}=B_{1}\cap B_{2}$. Any block $B\in\mathcal{S}$ should have a non-empty intersection with as well $B_{1}$, $B_{2}$ as $B_{3}$. Since $\mathcal{D}$ does not contain an O'Nan configuration, $B$ must pass through $P_{1}$, $P_{2}$ or $P_{3}$.
  \par If the block $B'_{i}\in\mathcal{S}$ passes through $P_{i}$, $B'_{i}\notin\{B_{1},B_{2},B_{3}\}$, and the block $B'_{j}\in\mathcal{S}$ passes through $P_{j}$, $B'_{j}\notin\{B_{1},B_{2},B_{3}\}$, $1\leq i\neq j\leq 3$, then the blocks $B_{i}$, $B_{j}$, $B'_{i}$ and $B'_{j}$ determine an O'Nan configuration, a contradiction. Hence, all blocks of $\mathcal{S}\setminus\{B_{1},B_{2},B_{3}\}$ pass through the same point $P_{i}$, $1\leq i\leq3$. Since $\mathcal{S}$ is maximal, it has to be a triangle based on the point $P_{i}$ and the block $B_{i}$.
\end{proof}

Note that $r>k+1$ for all $2-(v,k,1)$ designs without an O'Nan configuration, but the affine plane of order $2$. Hence, the point-pencil is the largest Erd\H{o}s-Ko-Rado set in these designs.
Of course, the above result only makes sense if $2-(v,k,1)$ designs without an O'Nan configuration exist. We give an example.

\begin{example}
  Let $\mathcal{H}(2,q^{2})$ be a non-singular Hermitian variety in $\PG(2,q^{2})$, the Desarguesian projective plane of order $q^{2}$. Up to projective transformations it is defined by $X^{q+1}_{0}+X^{q+1}_{1}+X^{q+1}_{2}=0$. The set of points on $\mathcal{H}(2,q^{2})$ and the secant lines to $\mathcal{H}(2,q^{2})$ in $\PG(2,q^{2})$, determine a unital. This unital is known as the {\em classical unital} or {\em Hermitian unital}.
\end{example}

\begin{theorem}[\cite{ona}]
  A classical unital $\mathcal{U}$ does not contain an O'Nan configuration.
\end{theorem}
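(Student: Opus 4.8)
The plan is to pass to an explicit model of the classical unital, use its large automorphism group to put a hypothetical O'Nan configuration into a normal form depending on only a handful of parameters in $\F_{q^{2}}$, and then show that the Hermitian conditions imposed on the six vertices are mutually contradictory. Throughout, write $\mathrm{Tr}(x)=x+x^{q}$ and $\mathrm{N}(x)=x^{q+1}$ for the trace and norm of $\F_{q^{2}}/\F_{q}$, and recall that the trace-zero elements of $\F_{q^{2}}$ form a $1$-dimensional $\F_{q}$-subspace. As a model of $\mathcal{U}$ I would take the affine points $(x,y)\in\F_{q^{2}}^{2}$ with $y^{q}+y=x^{q+1}$ together with one point $P_{\infty}$ at infinity, this being the only point of $\mathcal{U}$ on the line at infinity; the blocks are the secant lines, a line carrying three points of $\mathcal{U}$ is automatically a block, and the blocks through $P_{\infty}$ are exactly the vertical lines $X=c$. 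Finally I would record that $\mathrm{PGU}(3,q)\leq\Aut(\mathcal{U})$ acts transitively on triples $(P,\ell,P')$ with $\ell$ a block and $P,P'$ two distinct points of $\ell$: the translations $(x,y)\mapsto(x+a,\,y+a^{q}x+c)$ with $c^{q}+c=a^{q+1}$, together with the maps $(x,y)\mapsto(\lambda x,\lambda^{q+1}y)$, already witness this.

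Now suppose $\ell_{1},\ell_{2},\ell_{3},\ell_{4}$ form an O'Nan configuration, with the six vertices $P_{ij}=\ell_{i}\cap\ell_{j}$ pairwise distinct and all on $\mathcal{U}$. Using the transitivity above I would send $\ell_{1}$ to $X=0$, $P_{13}$ to $P_{\infty}$, and $P_{12}$ to the origin; then the third vertex on $\ell_{1}$ is $P_{14}=(0,\delta)$ with $\delta\neq0$ and $\mathrm{Tr}(\delta)=0$. Consequently $\ell_{3}$, being a block through $P_{\infty}$, is a vertical line $X=s$ with $s\neq0$; $\ell_{2}$ is a non-vertical line through the origin, so $Y=mX$; and $\ell_{4}$ is a non-vertical line through $(0,\delta)$, so $Y=m'X+\delta$. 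Moreover $m\neq m'$, since otherwise $P_{24}$ would be a point at infinity other than $P_{\infty}$, hence not on $\mathcal{U}$. Imposing that $P_{23}=(s,ms)$, $P_{34}=(s,m's+\delta)$ and $P_{24}=\bigl(\tfrac{\delta}{m-m'},\tfrac{m\delta}{m-m'}\bigr)$ all lie on $\mathcal{U}$, and simplifying using $\delta^{q}=-\delta$, yields exactly
\[
\mathrm{Tr}(ms)=\mathrm{N}(s),\qquad \mathrm{Tr}(m's)=\mathrm{N}(s),\qquad m(m-m')^{q}-m^{q}(m-m')=-\delta .
\]

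For the endgame, subtract the first two relations to get $\mathrm{Tr}\bigl((m-m')s\bigr)=0$, so $(m-m')s=\tau\delta$ for a unique $\tau\in\F_{q}$; and the condition $P_{23}\neq P_{34}$ is precisely $\tau\neq1$. Substituting $m-m'=\tau\delta/s$ into the third relation and simplifying (once more with $\delta^{q}=-\delta$ and $\tau^{q}=\tau$) collapses it to $\tau\,\mathrm{Tr}(ms)=\mathrm{N}(s)$; together with $\mathrm{Tr}(ms)=\mathrm{N}(s)\neq0$ this forces $\tau=1$, a contradiction. Hence no O'Nan configuration exists. I expect the only real difficulty to be organisational: choosing a model and a normalisation in which the many incidence and on-$\mathcal{U}$ conditions reduce to the three displayed equations, and handling the semilinear bookkeeping — it is exactly the fact that the trace-zero elements form the $\F_{q}$-line through $\delta$ that makes the substitution $m-m'=\tau\delta/s$ legitimate and the final cancellation work. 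One could alternatively invoke O'Nan's original argument, or exploit that the section of $\mathcal{U}$ by a block is a Baer subline, but the computation above is short and self-contained.
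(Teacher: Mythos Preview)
Your argument is correct. Note, however, that the paper itself does not prove this theorem at all: it is quoted from O'Nan's 1972 paper and used as a black box. So there is no ``paper's own proof'' to compare against; what you have written is a clean self-contained proof where the paper offers only a citation.

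One small expository point: the affine automorphisms you display --- the translations $(x,y)\mapsto(x+a,y+a^{q}x+c)$ and the scalings $(x,y)\mapsto(\lambda x,\lambda^{q+1}y)$ --- all fix $P_{\infty}$, so by themselves they only witness transitivity on triples whose first point is already $P_{\infty}$. To move an arbitrary $P_{13}$ to $P_{\infty}$ you need one further element of $\mathrm{PGU}(3,q)$; in your model $Y^{q}Z+YZ^{q}=X^{q+1}$ the projective involution swapping $Y$ and $Z$ (affinely $(x,y)\mapsto(x/y,1/y)$) interchanges $P_{\infty}$ and the origin, and together with your affine maps this gives the full $2$-transitive action. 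This is standard and does not affect the validity of the proof, but the phrase ``already witness this'' slightly overstates what the displayed maps accomplish on their own.

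The computational core --- reducing to the three displayed relations and then using that the trace-zero elements of $\F_{q^{2}}$ are exactly the $\F_{q}$-multiples of $\delta$ to force $\tau=1$, contradicting $P_{23}\neq P_{34}$ --- is correct and works uniformly in all characteristics (in characteristic $2$ one simply has $-\delta=\delta$, and nothing changes). The alternative route via Baer sublines that you allude to is also valid but not shorter.
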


It is conjectured that the classical unitals are the only unitals not containing an O'Nan configuration, see \cite{bro,pip}. In \cite{bro} this conjecture is proven to be true for unitals of order $3$. The unique unital of order $2$ is also classical.

\begin{corollary}\label{classclass}
  On a classical unital there are only two types of maximal Erd\H{o}s-Ko-Rado sets, the point-pencil and the triangle.
\end{corollary}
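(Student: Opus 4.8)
The plan is to read this off directly from Theorem~\ref{classONan} together with the quoted theorem of O'Nan; the only real content is to check that both listed types genuinely occur, so that the statement asserts \emph{exactly} two types and not merely at most two.

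First I would recall the parameters of a unital of order $n$: it is a $2-(n^{3}+1,n+1,1)$ design, so $k=n+1$ and $r=(v-1)/(k-1)=n^{3}/n=n^{2}$; in particular $r=n^{2}>n+1=k$ for every $n\geq2$. Next, since a classical unital $\mathcal{U}$ contains no O'Nan configuration by the result cited above, Theorem~\ref{classONan} applies verbatim and shows that every maximal Erd\H{o}s-Ko-Rado set on $\mathcal{U}$ is a point-pencil or a triangle. This settles the ``at most two'' direction.

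It remains to see that neither possibility is empty. Since $r>k$, a point-pencil is a maximal Erd\H{o}s-Ko-Rado set, as noted in the introduction. Since $v>k$, we can choose a point $P$ and a block $B$ with $P\notin B$, and the triangle determined by $P$ and $B$ is a maximal Erd\H{o}s-Ko-Rado set by the Example preceding Theorem~\ref{classONan}; this step uses precisely the absence of an O'Nan configuration. Moreover a triangle is never a point-pencil: all of its blocks except $B$ pass through $P$, so if it were the pencil through a point $Q$ then (as $k\geq2$) at least two blocks would pass through both $P$ and $Q$, forcing $Q=P$ and contradicting $P\notin B$. Hence the two types are distinct for every $n\geq2$, and for $n\geq3$ they even have different sizes, $r=n^{2}>n+2=k+1$, so the point-pencil is strictly the largest.

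There is essentially no obstacle here: the statement is an immediate corollary of Theorem~\ref{classONan}. The only care needed is the last step, confirming that both types really arise and are genuinely different --- the only mildly delicate point being $n=2$ (the unique unital of order $2$), where a point-pencil and a triangle happen to have the same number $k+1=4$ of blocks and one must argue separately that they are still not of the same type.
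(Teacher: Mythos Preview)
Your proof is correct and follows the same route as the paper: the corollary is an immediate consequence of Theorem~\ref{classONan} together with O'Nan's theorem that the classical unital contains no O'Nan configuration, and the paper in fact leaves it at that without writing out any argument. Your additional care in verifying that both types genuinely occur and are distinct (including the borderline case $n=2$, where point-pencils and triangles both have size $4$) is not in the paper, but it is a legitimate tightening rather than a different approach.
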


\section{The counting arguments}

In this section we will study maximal Erd\H{o}s-Ko-Rado sets in $2-(v,k,1)$ designs that are not point-pencils.

\begin{notation}\label{nota}
  Let $\mathcal{D}$ be a $2-(v,k,1)$ design and let $\mathcal{S}$ be an Erd\H{o}s-Ko-Rado set on $\mathcal{D}$. Denote the set of points of $\mathcal{D}$ covered by the blocks of $\mathcal{S}$ by $\mathcal{P}'$.
  \par We denote the number of points of $\mathcal{P}'$ that are contained in precisely $i$ blocks of $\mathcal{S}$ by $k_{i}$. Furthermore we denote $k_{\mathcal{S}}=\max\{i\mid k_{i}>0\}$.
\end{notation}

\begin{lemma}\label{upperbounds}
  Let $\mathcal{D}$ be a $2-(v,k,1)$ design and let $\mathcal{S}$ be an Erd\H{o}s-Ko-Rado set on $\mathcal{D}$. Then $|\mathcal{S}|\leq k_{\mathcal{S}}k-k+1$. If $\mathcal{S}$ is maximal and different from the point-pencil, then $k_{\mathcal{S}}\leq k$.
\end{lemma}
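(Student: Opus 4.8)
The plan is to fix a point $P \in \mathcal{P}'$ that lies in the maximum number $k_{\mathcal{S}}$ of blocks of $\mathcal{S}$, and to count the blocks of $\mathcal{S}$ by how they meet this pencil through $P$. Let $B_1, \dots, B_{k_{\mathcal{S}}}$ be the blocks of $\mathcal{S}$ through $P$. Any other block $B \in \mathcal{S}$ must intersect each $B_i$; since $B \not\ni P$ and each pair of blocks meets in exactly one point (as $\lambda = 1$), the block $B$ meets $B_i$ in a point $Q_i \neq P$. As $B$ has only $k$ points, and the $Q_i$ need not all be distinct, $B$ covers at least one point of at least one $B_i \setminus \{P\}$; more precisely, the blocks of $\mathcal{S}$ not through $P$ can be distributed over the $k_{\mathcal{S}}(k-1)$ points lying on $\bigcup_i (B_i \setminus\{P\})$. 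The key observation is that through each such point $Q$, since $\lambda=1$, there is a unique block of $\mathcal{D}$ containing $Q$ and the relevant point of $B_i$ — so in fact each of the $k_{\mathcal{S}}(k-1)$ points off $P$ on these blocks lies on at most one block of $\mathcal{S}$ other than the $B_i$. Hence $|\mathcal{S}| \leq k_{\mathcal{S}} + k_{\mathcal{S}}(k-1) \cdot 1$? That overcounts; the correct bookkeeping is that every block $B \in \mathcal{S} \setminus \{B_1,\dots,B_{k_{\mathcal{S}}}\}$ is counted once for each $B_i$ it meets off $P$, i.e. at least once, while each point off $P$ on $\bigcup_i(B_i \setminus \{P\})$ accounts for at most one block of $\mathcal{S}$ through it besides the $B_i$'s; a more careful version of this incidence count gives $|\mathcal{S}| - k_{\mathcal{S}} \le k_{\mathcal{S}}(k-1) - (k_{\mathcal{S}} - 1)$, which rearranges to $|\mathcal{S}| \le k_{\mathcal{S}} k - k + 1$. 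I expect the cleanest route is: each of the $k_{\mathcal{S}}$ blocks $B_i$ contains $k-1$ points besides $P$, and since a block $B\in\mathcal S$ not through $P$ meets every $B_i$, summing $|B\cap B_i|=1$ over $i$ shows $B$ occupies $k_{\mathcal S}$ of the points of $\bigcup_i(B_i\setminus\{P\})$ counted with multiplicity; comparing with the total number $k_{\mathcal S}(k-1)$ of such points (where $P$ has been removed once from each $B_i$, but the $B_i$ pairwise share only $P$, so these point-sets are disjoint) yields the bound after accounting for the fact that the $B_i$ themselves use up one point per block.

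For the second statement, suppose $\mathcal{S}$ is maximal and not a point-pencil, yet $k_{\mathcal{S}} \ge k+1$. Let $P$ be a point on $k_{\mathcal{S}}$ blocks of $\mathcal{S}$. Since $\mathcal{S}$ is not the point-pencil at $P$, there is a block $B \in \mathcal{S}$ with $P \notin B$. Now $B$ meets each of the $k_{\mathcal{S}} \ge k+1$ blocks through $P$, in points $Q_1, \dots, Q_{k_{\mathcal{S}}}$ of $B$, all different from $P$; but $|B| = k < k_{\mathcal{S}}$, so by pigeonhole two of these, say $Q_i = Q_j$ with $i \neq j$, coincide. Then $B_i$ and $B_j$ both contain $P$ and $Q_i = Q_j$, two distinct points, contradicting $\lambda = 1$ (the two points $P, Q_i$ would lie on two blocks). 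Hence $k_{\mathcal{S}} \le k$.

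The only genuinely delicate point is getting the constant right in the first inequality: one must be careful that the point-sets $B_i \setminus \{P\}$ are pairwise disjoint (true since $B_i \cap B_j = \{P\}$ by $\lambda=1$), and that in the double count "block $B$ of $\mathcal{S}$ through $P$" versus "block $B$ of $\mathcal{S}$ not through $P$, meeting $B_i$ off $P$" the contribution of the blocks $B_1, \dots, B_{k_{\mathcal{S}}}$ is handled consistently. I would phrase it as: the $|\mathcal{S}| - k_{\mathcal{S}}$ blocks not through $P$ each meet $\bigcup_{i}(B_i\setminus\{P\})$ in at least one point, and conversely each of the $k_{\mathcal S}(k-1)$ points of $\bigcup_i (B_i\setminus\{P\})$ lies on at most... — here one uses that the blocks through such a point $Q$ that also meet the pencil are limited because $Q$ together with $P$ determines $B_i$ uniquely. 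Pinning down this last incidence constraint so that it delivers exactly $k_{\mathcal S}k - k + 1$, rather than something weaker, is the main obstacle; everything else is a short pigeonhole argument using $\lambda = 1$.
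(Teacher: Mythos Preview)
Your argument for the second statement is correct and matches the paper's: pick a block $B\in\mathcal S$ not through $P$ (which exists by maximality together with the assumption that $\mathcal S$ is not a point-pencil), and observe that the $k$ points of $B$ must absorb the pairwise distinct intersection points of $B$ with the blocks of $\mathcal S$ through $P$, forcing $k_{\mathcal S}\le k$.

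For the first inequality, however, you have not closed the argument, and the reason is that fixing a \emph{point} $P$ on $k_{\mathcal S}$ blocks and then trying to count the remaining blocks by how they meet the pencil through $P$ is the wrong decomposition. You yourself flag that ``pinning down this last incidence constraint'' is the obstacle; the displayed inequality you write, $|\mathcal S|-k_{\mathcal S}\le k_{\mathcal S}(k-1)-(k_{\mathcal S}-1)$, actually rearranges to $|\mathcal S|\le k_{\mathcal S}k-k_{\mathcal S}+1$, not to $k_{\mathcal S}k-k+1$, so the bookkeeping is genuinely off. The paper instead fixes an arbitrary \emph{block} $C\in\mathcal S$. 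Every block of $\mathcal S$ meets $C$; each of the $k$ points of $C$ lies on at most $k_{\mathcal S}$ blocks of $\mathcal S$, hence on at most $k_{\mathcal S}-1$ blocks of $\mathcal S\setminus\{C\}$; therefore $|\mathcal S|-1\le k(k_{\mathcal S}-1)$, which is exactly the claimed bound. No delicate accounting is needed once you count along a block rather than around a point. (If you insist on the point-based route, the double count of pairs $(B,i)$ with $P\notin B\in\mathcal S$ and $B\cap B_i\ne\emptyset$ gives $(|\mathcal S|-k_{\mathcal S})\,k_{\mathcal S}\le k_{\mathcal S}(k-1)(k_{\mathcal S}-1)$, which does yield the bound---but this is a detour compared to the one-line block argument.)
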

\begin{proof}
  Fix a block $C\in\mathcal{S}$. All blocks of $\mathcal{S}$ have a nontrivial intersection with $C$, so
  \[
    |\mathcal{S}|\leq 1+k(k_{\mathcal{S}}-1)=k_{\mathcal{S}}k-k+1\;.
  \]
  Now we prove the second part of the lemma. For every point $P\in\mathcal{P}'$, we can find a block $B\in\mathcal{S}$ not passing through $P$, since $\mathcal{S}$ is maximal but not a point-pencil. Any block of $\mathcal{S}$ through $P$ should meet $B$ and there is at most one block in $\mathcal{S}$ through $P$ and a given point of $B$. Hence, there are at most $k$ blocks in $\mathcal{S}$ passing through $P$. Consequently, $k_{\mathcal{S}}\leq k$.
\end{proof}

\begin{lemma}\label{maximalisatie}
  Choose $l\in\N\setminus\{0,1\}$, and $a,b\in\Z$ with
  \begin{align*}
    a&\geq\max\left\{-l(r-l-1)+1-\frac{br}{l+1},-\frac{b(b-1)}{(l+1)l}-2(b-1)\right\}\;,\\
    a&\leq\frac{rl-l^{2}+l-1}{l-1}-\frac{b(2l^{2}+2l-r+b-1)}{l^{2}-1}\;.
  \end{align*}
  Let $n_{1},\dots,n_{l}\in\N$ be such that $\sum^{l}_{i=1}in_{i}=(a-1)(l+1)+br+l(l+1)(r-l-1)$ and $\sum^{l}_{i=2}i(i-1)n_{i}=b(b-1)+l(l+1)(a+2b-2)$. Then $\sum^{l}_{i=2}(i-1)n_{i}\leq\binom{b}{2}+(a+2b-2)\binom{l+1}{2}$.
\end{lemma}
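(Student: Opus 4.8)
The statement is a purely arithmetic optimization: given three sequences constraints on $n_1,\dots,n_l \in \mathbb{N}$—the zeroth and "first moment" equation $\sum i n_i = M_1$, the "second moment" equation $\sum i(i-1)n_i = M_2$—we want to bound the intermediate quantity $\sum_{i=2}^l (i-1)n_i$. The natural approach is Lagrange-multiplier / convexity reasoning: among all nonnegative real solutions of the two linear moment equations, the functional $\sum (i-1)n_i$ is linear, so its maximum over the (bounded) polytope is attained at a vertex, i.e. at a solution supported on at most two indices. So the plan is: first reduce to the extremal configuration where $n_i$ is concentrated on two values of $i$, say $i = 1$ and $i = l$ (the extremes), solve the two moment equations for $n_1, n_l$ in closed form, and substitute into $\sum(i-1)n_i = (l-1)n_l$ to get an explicit expression; then show this expression is at most $\binom{b}{2} + (a+2b-2)\binom{l+1}{2}$, which should reduce exactly to the two displayed inequalities on $a$.

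**Key steps in order.** First I would observe that for any index $i$ with $2 \le i \le l-1$, one has the pointwise inequality $(i-1) \le \frac{?}{?}$ interpolating $(1-1)=0$ at $i=1$ and using $i(i-1)$; concretely, since $i-1$ is a convex-combination-friendly quantity, I claim $i-1 \le \frac{l-i}{l-1}\cdot 0 \cdot(\text{from } i{=}1) + (\text{something})\cdot i(i-1)$—more precisely the clean inequality is $(l-1)(i-1) \le i(i-1)$ for $i \le l$, i.e. $l-1 \ge i-1$, trivially true, and $(i-1)(l-i)\ge 0$. The useful algebraic identity is: for $1\le i\le l$,
\[
(i-1) = \frac{l-i}{\ l-1\ }\cdot 0 + \frac{i(i-1)}{\ l-1\ }\cdot\frac{l-1}{\,i\,}
\]
is not linear, so instead I use the exact linear bound $i(i-1) - (l-1)(i-1) = (i-1)(i-l) \le 0$, giving $\sum_{i\ge2}(i-1)n_i \ge \frac1{l-1}\sum i(i-1)n_i$ — the wrong direction. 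So the correct move is the other linear comparison: write $i-1 = \alpha\, i + \beta\, i(i-1)$ is impossible, so instead bound $\sum(i-1)n_i$ above by combining $\sum i n_i$ and $\sum i(i-1)n_i$ with nonnegative coefficients chosen so the coefficient of each $n_i$ is $\ge i-1$ for all $1\le i\le l$. That is: find $\lambda,\mu \ge 0$ with $\lambda i + \mu i(i-1) \ge i-1$ for $i=1,\dots,l$, and minimize $\lambda M_1 + \mu M_2$. The binding constraints will be at $i=1$ (forcing $\lambda \ge 0$, automatically fine since $i-1=0$ there — actually gives $\lambda\ge 0$) and at $i=l$ (forcing $\lambda l + \mu l(l-1) \ge l-1$). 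Optimizing this tiny LP gives the bound, and plugging in $M_1 = (a-1)(l+1) + br + l(l+1)(r-l-1)$ and $M_2 = b(b-1) + l(l+1)(a+2b-2)$ should collapse to exactly the RHS $\binom b2 + (a+2b-2)\binom{l+1}2$.

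**Where the real work is.** The genuine obstacle is not the LP duality—that is a two-variable calculation—but verifying that the chosen multipliers actually satisfy $\lambda i + \mu i(i-1)\ge i-1$ for \emph{every} integer $i$ in $\{1,\dots,l\}$, not just at the endpoints; since $\lambda i + \mu i(i-1) - (i-1)$ is quadratic in $i$ with positive leading coefficient $\mu$, it suffices to check it is $\ge 0$ at $i=1$ and at the right endpoint $i=l$, provided the vertex of the parabola lies outside $[1,l]$ or the parabola is nonnegative throughout — this needs $\mu \ge 0$ and a sign check, which is where the hypotheses on $a$ (and the two-sided bound, especially the \emph{lower} bound on $a$ involving $-\frac{b(b-1)}{(l+1)l} - 2(b-1)$) enter to guarantee $M_1, M_2 \ge 0$ and hence that the moment equations are even consistent with $n_i \ge 0$. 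So I expect the proof to run: (1) state the pointwise quadratic inequality and the explicit $\lambda,\mu$; (2) sum against $n_i$ to get $\sum(i-1)n_i \le \lambda M_1 + \mu M_2$; (3) substitute and simplify; (4) note the whole thing is vacuous/consistent exactly under the stated bounds on $a$, which is what those two displayed inequalities encode. The second displayed upper bound on $a$ is, I anticipate, precisely the rearrangement of "$\lambda M_1 + \mu M_2 \le \binom b2 + (a+2b-2)\binom{l+1}2$", while the first (lower) bound on $a$ guarantees the $n_i$ data can exist at all.
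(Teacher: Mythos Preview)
Your LP/duality framework is sound in principle, but you have misidentified the extremal configuration. The maximum of $\sum_{i\ge 2}(i-1)n_i$ subject to the second-moment constraint $\sum_{i\ge 2}i(i-1)n_i=M_2$ (with $n_1$ soaking up whatever is needed for the first moment) is \emph{not} attained by concentrating mass at $i=1$ and $i=l$; it is attained at $i=1$ and $i=2$. In dual terms, the constraints $\lambda i+\mu i(i-1)\ge i-1$ rewrite (for $i\ge 2$) as $\lambda+(i-1)\mu\ge 1-\tfrac1i$, and with $\lambda=0$ the binding constraint is $i=2$, giving $\mu=\tfrac12$. This yields immediately
\[
\sum_{i\ge 2}(i-1)n_i\;\le\;\tfrac12\sum_{i\ge 2}i(i-1)n_i\;=\;\tfrac12M_2\;=\;\binom{b}{2}+(a+2b-2)\binom{l+1}{2},
\]
which is already the claimed bound---no further optimization is needed. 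The paper proves exactly this via the equivalent shifting argument: replace one unit of $n_j$ (for $j\ge 3$) by $\tfrac{j(j-1)}{2}$ units of $n_2$; this preserves $M_2$ and strictly increases $\sum(i-1)n_i$, so the maximum occurs when all mass sits at $i=2$.

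You also have the roles of the hypotheses on $a$ reversed. The two lower bounds on $a$ ensure $M_1\ge 0$ and $M_2\ge 0$, so that nonnegative $n_i$ can exist at all; the \emph{upper} bound on $a$ is what guarantees $n_1\ge 0$ after the shifting (equivalently, that the feasible region is nonempty with $n_1\in\mathbb{N}$). It is \emph{not} used to push $\lambda M_1+\mu M_2$ below the right-hand side---with $\lambda=0$ and $\mu=\tfrac12$ that inequality is an identity.
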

\begin{proof}
  Note that the inequalities $-l(r-l-1)+1-\frac{br}{l+1}\leq a$ and $-\frac{b(b-1)}{(l+1)l}-2(b-1)\leq a$ are present to ensure that both $a(l+1)+br+l(l+1)(r-l-1)-l-1$ and $b(b-1)+l(l+1)(a+2b-2)$ are nonnegative.
  \par Using the first equality, we can express $n_{1}$ as a function of $l$, $a$, $b$ and $n_{2},\dots,n_{l}$. Note that
  \begin{align*}
    n_{1}&=(a-1)(l+1)+br+l(l+1)(r-l-1)-\sum^{l}_{i=2}in_{i}\\
    &\geq(a-1)(l+1)+br+l(l+1)(r-l-1)-\sum^{l}_{i=2}i(i-1)n_{i}\\
    &=(a-1)(l+1)+br+l(l+1)(r-l-1)-b(b-1)-l(l+1)(a+2b-2)\\
    &=-a(l^{2}-1)-b(b-1)-b(2l^{2}+2l-r)+(l+1)(rl-l^{2}+l-1)\\
    &\geq 0\;,
  \end{align*}
  by the assumption. Hence, for every choice of $n_{2},\dots,n_{l}$, we can find a value $n_{1}\in\N$ such that the first equality holds. Now, we focus on the second equality. Assume that $n_{j}>0$ for a value $j\geq 3$. Then define $n'_{j}=n_{j}-1$, $n'_{2}=n_{2}+\frac{j(j-1)}{2}$ and $n'_{k}=n_{k}$ for $k\notin\{2,j\}$. It follows that
  \[
    \sum^{l}_{i=2}i(i-1)n'_{i}=\sum^{l}_{i=2}i(i-1)n_{i}=b(b-1)+l(l+1)(a+2b-2)\;.
  \]
  However,
  \[
    \sum^{l}_{i=2}(i-1)n'_{i}=\left(\sum^{l}_{i=2}(i-1)n_{i}\right)-(j-1)+\frac{j(j-1)}{2}>\sum^{l}_{i=2}(i-1)n_{i}\;,
  \]
  since $j\geq3$. So, repeatedly applying the above construction, we find that $\sum^{l}_{i=2}(i-1)n_{i}$ is maximal if $n_{i}=0$ for all $i\geq3$ and $n_{2}=\binom{b}{2}+(a+2b-2)\binom{l+1}{2}$. The lemma follows.
\end{proof}

\begin{lemma}\label{mainlemma}
  Let $\mathcal{D}$ be a $2-(v,k,1)$ design with replication number $r=\frac{v-1}{k-1}$, $k\geq3$, and let $\mathcal{S}$ be an Erd\H{o}s-Ko-Rado set on $\mathcal{D}$ 
  such that $|\mathcal{P}'|=k(k-1)+b$. Then
  \begin{multline*}
    |\mathcal{S}|\leq \max\left\{k^{2}-k+1-2\frac{(r-k)(k^{2}-k+1-r)}{k(k-2)}+\frac{b(b-1)}{(k-1)(k-2)}+2\frac{(b-1)(k^{2}-k-r)}{(k-1)(k-2)},\right.\\\left.k^{2}-r-\frac{r-1}{k-2}+\frac{b(b-1-r+2k(k-1))}{k(k-2)}\right\}\;.
  \end{multline*}
\end{lemma}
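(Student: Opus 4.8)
The plan is a double count with the numbers $k_i$ of Notation \ref{nota}, fed into Lemma \ref{maximalisatie}. Write $s=|\mathcal S|$. Counting incidences of $\mathcal S$ with points gives $\sum_i ik_i=ks$; counting covered points gives $\sum_i k_i=|\mathcal P'|=k(k-1)+b$; and, because two distinct blocks of a $2-(v,k,1)$ design meet in at most one point while two blocks of $\mathcal S$ always meet, counting block-pairs against their unique common point gives $\sum_i\binom i2k_i=\binom s2$, i.e.\ $\sum_i i(i-1)k_i=s(s-1)$. Subtracting the covered-point count from the incidence count, $\sum_{i\ge2}(i-1)k_i=ks-k(k-1)-b$.

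If $\mathcal S$ is a point-pencil the assertion is checked by hand: then $s=r$, $|\mathcal P'|=1+r(k-1)$ so $b=(k-1)(r-k)+1$, and one verifies that $r$ does not exceed either expression in the maximum. So assume $\mathcal S$ is not a point-pencil; then no point lies on every block, and for any block $C\in\mathcal S$ the blocks of $\mathcal S$ through a point $P\notin C$ meet $C$ in distinct points, so $P$ lies in at most $k$ of them and hence $k_{\mathcal S}\le k$. Now single out a point $Q$ of maximal multiplicity $m:=k_{\mathcal S}\in\{2,\dots,k\}$ and redo the count "based at $Q$": the $r$ blocks through $Q$ partition $\mathcal P\setminus\{Q\}$ into $r$ classes of size $k-1$, the $m$ blocks of $\mathcal S$ through $Q$ occupy $m$ of these, and each of the remaining $s-m$ blocks of $\mathcal S$ avoids $Q$ while meeting all of those $m$ classes and $k-m$ of the other $r-m$. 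Letting $n_i$ be the number of points $P\ne Q$ (or, in a variant, of points off the union of the $m$ blocks through $Q$) in exactly $i$ blocks of $\mathcal S$, with $1\le i\le m$, one obtains — after substituting $v=r(k-1)+1$ — explicit expressions for $\sum_i n_i$, $\sum_i in_i$ and $\sum_i i(i-1)n_i$ in terms of $s,m,r,k,b$, this being the step where $r$ genuinely enters.

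Next apply Lemma \ref{maximalisatie} with $l=m$, choosing its parameters $a,b$ so that its two equality hypotheses become the incidence identity and the pair identity just obtained; this pins $a$ and $b$ down as specific (a priori rational) functions of $s,m,r,k$. I expect the payoff to come in two pieces. The two lower bounds on $a$ demanded by Lemma \ref{maximalisatie} become two inequalities relating $s$ to $m,r,k,b$, each quadratic in $s$ once the pair count is inserted for the $s^2$ term; solving them and taking the worst case over the admissible $m\in\{2,\dots,k\}$ is, I expect, the origin of the two closed forms inside the maximum. The upper bound on $a$, together with the elementary estimate $s\le mk-k+1$ of Lemma \ref{upperbounds}, is then used to discard the extraneous root of each quadratic.

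The main obstacle is the bookkeeping around Lemma \ref{maximalisatie}: one must check that the chosen $a,b$ are integers — or that the argument survives the rounding, which is precisely where the $\binom b2$-type corrections appear, and the reason Lemma \ref{maximalisatie} is phrased through the exact extremal configuration $n_i=0$ for $i\ge3$ rather than the cruder $\sum_i(i-1)n_i\le\tfrac12\sum_i i(i-1)n_i$ — and, above all, that the displayed inequalities on $a$ actually hold. This last verification is what forces a lower bound on $r$ relative to $k$, matching the numerical hypotheses under which this lemma will later be invoked. Everything after the hypotheses of Lemma \ref{maximalisatie} are in place is a long but routine algebraic simplification into the stated form.
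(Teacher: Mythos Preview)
Your double count is set up on the wrong side: you classify \emph{points} by their multiplicity in $\mathcal S$ (the $k_i$), whereas the argument that yields the stated bound classifies \emph{blocks} by how many points of $\mathcal P'$ they contain. Your three identities $\sum k_i=k(k-1)+b$, $\sum ik_i=ks$, $\sum i(i-1)k_i=s(s-1)$ are correct, but notice that $r$ does not appear in any of them, and your attempt to inject $r$ ``based at $Q$'' is not a genuine new count: whether you take $n_i$ over $\mathcal P'\setminus\{Q\}$ or over the points off the $m$ blocks through $Q$, the three sums $\sum n_i$, $\sum in_i$, $\sum i(i-1)n_i$ are still determined by $s$, $k$, $b$ and $m$ alone. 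Substituting $v=r(k-1)+1$ does nothing because $v$ never shows up. Consequently no manipulation of these quantities can produce an upper bound that depends on $r$, and the lemma's bound manifestly does.

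The actual mechanism is as follows. Let $m_i$ be the number of blocks of $\mathcal D$ meeting $\mathcal P'$ in exactly $i$ points; then $\mathcal S\subseteq\mathcal B_k$, so it suffices to bound $m_k$, and one sets $a=k^2-k+1-m_k$. The first two counts are $\sum im_i=|\mathcal P'|\,r$ (this is where $r$ enters) and $\sum i(i-1)m_i=|\mathcal P'|(|\mathcal P'|-1)$. The third ingredient, which has no analogue in your outline, is an \emph{inequality}: counting triples $(P,P',B)$ with $P,P'\in\mathcal P\setminus\mathcal P'$ distinct and $B\in\mathcal B_1$ gives $m_1(k-1)(k-2)\le |\mathcal P\setminus\mathcal P'|(|\mathcal P\setminus\mathcal P'|-1)-\sum_{i\ge2}(k-i)(k-i-1)m_i$. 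After eliminating $m_1$ and the quadratic sum, this becomes a lower bound on $\sum_{i=2}^{k-1}(i-1)m_i$. Lemma \ref{maximalisatie} is then applied with $l=k-1$ and $n_i=m_i$ (not with $l=k_{\mathcal S}$), and the two expressions inside the maximum come from the dichotomy on whether the \emph{upper} bound hypothesis on $a$ in Lemma \ref{maximalisatie} holds: if it fails, $a$ is large and one reads off the second bound directly; if it holds, the conclusion of Lemma \ref{maximalisatie} combined with the lower bound just derived forces a lower bound on $a$, giving the first bound. No optimisation over $k_{\mathcal S}$ occurs anywhere in this lemma.
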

\begin{proof}
  Recall that $\mathcal{B}$ is the set of blocks of $\mathcal{D}$. We denote the subset of $\mathcal{B}$ containing precisely $i$ points of $\mathcal{P}'$ by $\mathcal{B}_{i}$ and we also denote $m_{i}=|\mathcal{B}_{i}|$. Note that $\mathcal{S}\subseteq\mathcal{B}_{k}$. We define $a:=k^{2}-k+1-|\mathcal{B}_{k}|$.
  Counting the tuples $(P,B)$ with $P\in\mathcal{P}'$, $B\in\mathcal{B}$ and $P$ on $B$, we find
  \[
    \sum^{k}_{i=1}im_{i}=(k(k-1)+b)r\;.
  \]
  Now applying $m_{k}=k^{2}-k+1-a$, we find
  \[
    m_{1}=(k(k-1)+b)r-\sum^{k-1}_{i=2}im_{i}-k(k^{2}-k+1-a)=k(k-1)(r-k)+(a-1)k+br-\sum^{k-1}_{i=2}im_{i}\;.
  \]
  Counting the tuples $(P,P',B)$ with $P,P'\in\mathcal{P}'$, $B\in\mathcal{B}$, $P\neq P'$ and both $P$ and $P'$ on $B$, we find
  \[
    \sum^{k}_{i=1}i(i-1)m_{i}=(k(k-1)+b)(k(k-1)+b-1)\;.
  \]
  Hence,
  \[
    \sum^{k-1}_{i=2}i(i-1)m_{i}=(k(k-1)+b)(k(k-1)+b-1)-k(k-1)(k^{2}-k+1-a)=b(b-1)+(a+2b-2)k(k-1)\;.
  \]
  Now we consider the set $T$ of triples $(P,P',B)$ with $P,P'\in\mathcal{P}\setminus\mathcal{P}'$, $B\in\mathcal{B}_{1}$, $P,P'\in B$ and $P\neq P'$. On the one hand we know
  \begin{multline*}
    |T|=m_{1}(k-1)(k-2)\\=k(k-1)^{2}(k-2)(r-k)+(a-1)k(k-1)(k-2)+br(k-1)(k-2)-(k-1)(k-2)\sum^{k-1}_{i=2}im_{i}\;.
  \end{multline*}
  On the other hand, using $|\mathcal{P}\setminus\mathcal{P}'|=v-k(k-1)-b=(r-k)(k-1)-(b-1)$, we can also find that
  \[
    |T|\leq \left((r-k)(k-1)-(b-1)\right)\left((r-k)(k-1)-b\right)-\sum^{k-1}_{i=2}(k-i)(k-i-1)m_{i}\;.
  \]
  Comparing this equality and inequality for $|T|$, we find
  \begin{multline*}
    \sum^{k-1}_{i=2}(k(k-1)(i-1)-i(i-1))m_{i}\geq k(k-1)^{2}(k-2)(r-k)+(a-1)k(k-1)(k-2)\\+br(k-1)(k-2)-b(b-1)-(r-k)^{2}(k-1)^{2}+(2b-1)(r-k)(k-1)\;.
  \end{multline*}
  Using the formula for $\sum^{k-1}_{i=2}i(i-1)m_{i}$, and dividing both sides by $k-1$, it follows that
  \[
    k\sum^{k-1}_{i=2}(i-1)m_{i}\geq ak(k-1)+bkr-k^{2}+(r-k)(k^{3}-2k^{2}-(r-1)(k-1))\;.
  \]
  We distinguish between two cases. If $a>r-k+1+\frac{r-1}{k-2}-\frac{b(b-1-r+2k(k-1))}{k(k-2)}$, then $|\mathcal{S}|\leq|\mathcal{B}_{k}|\leq k^{2}-r-\frac{r-1}{k-2}+\frac{b(b-1-r+2k(k-1))}{k(k-2)}$. If $a\leq r-k+1+\frac{r-1}{k-2}-\frac{b(b-1-r+2k(k-1))}{k(k-2)}$, we can apply Lemma \ref{maximalisatie} with $l=k-1$. Note that the conditions $-l(r-l-1)+1-\frac{br}{l+1}\leq a$ and $-\frac{b(b-1)}{(l+1)l}-2(b-1)\leq a$ are fulfilled since $\sum^{k-1}_{i=2}i(i-1)m_{i}$ and $\sum^{k-1}_{i=1}im_{i}$ are nonnegative. We find
  \[
    k\binom{b}{2}+k(a+2b-2)\binom{k}{2}\geq ak(k-1)+bkr-k^{2}+(r-k)(k^{3}-2k^{2}-(r-1)(k-1))\;,
  \]
  hence
  \[
    a\geq\frac{2(r-k)(k^{2}-k+1-r)}{k(k-2)}-\frac{2(b-1)(k^{2}-k-r)}{(k-1)(k-2)}-\frac{b(b-1)}{(k-1)(k-2)}\;.
  \]
  We find thus that
  \[
    |\mathcal{S}|\leq|\mathcal{B}_{k}|\leq k^{2}-k+1-\frac{2(r-k)(k^{2}-k+1-r)}{k(k-2)}+\frac{2(b-1)(k^{2}-k-r)}{(k-1)(k-2)}+\frac{b(b-1)}{(k-1)(k-2)}\;,
  \]
  which finishes the proof.
\end{proof}

Using the substitution $R=(k-1)^{2}-r$, we can rewrite this lemma.

\begin{corollary}\label{mainlemmaR}
  Let $\mathcal{D}$ be a $2-(v,k,1)$ design, $k\geq3$, and denote $(k-1)^{2}-r=(k-1)^{2}-\frac{v-1}{k-1}$ by $R$. Let $\mathcal{S}$ be an Erd\H{o}s-Ko-Rado set on $\mathcal{D}$ 
  such that $|\mathcal{P}'|=k(k-1)+b$. Then
  \begin{multline*}
    |\mathcal{S}|\leq \max\left\{k^{2}-k+1-2\frac{(k^{2}-3k+1-R)(k+R)}{k(k-2)}+\frac{b(b-1)}{(k-1)(k-2)}+2\frac{(b-1)(k-1+R)}{(k-1)(k-2)},\right.\\\left.k-1+R+\frac{R}{k-2}+\frac{b(b+k^{2}+R-2)}{k(k-2)}\right\}\;.
  \end{multline*}
\end{corollary}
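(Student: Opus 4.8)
The corollary is simply Lemma \ref{mainlemma} re-expressed after the change of variable, so the plan is to substitute $r=(k-1)^{2}-R=k^{2}-2k+1-R$ into every quantity that occurs in the two branches of the bound and to simplify term by term. First I would record the elementary identities $r-k=k^{2}-3k+1-R$, $k^{2}-k+1-r=k+R$ and $k^{2}-k-r=k-1+R$. Plugging these into the first branch $k^{2}-k+1-2\frac{(r-k)(k^{2}-k+1-r)}{k(k-2)}+\frac{b(b-1)}{(k-1)(k-2)}+2\frac{(b-1)(k^{2}-k-r)}{(k-1)(k-2)}$ of Lemma \ref{mainlemma} immediately yields the first expression in the statement, the summand $\frac{b(b-1)}{(k-1)(k-2)}$ being unaffected by the substitution.

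For the second branch I would use $k^{2}-r=2k-1+R$, the identity $r-1=k^{2}-2k-R=k(k-2)-R$ — so that $-\frac{r-1}{k-2}=-k+\frac{R}{k-2}$ — and $b-1-r+2k(k-1)=b+k^{2}+R-2$. Combining the first two gives $k^{2}-r-\frac{r-1}{k-2}=k-1+R+\frac{R}{k-2}$, and the last summand becomes $\frac{b(b+k^{2}+R-2)}{k(k-2)}$, which is precisely the second branch in the statement. Taking the maximum of the two rewritten expressions then completes the argument.

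Since this is nothing more than a substitution followed by a routine rearrangement, I do not expect any genuine obstacle; the only place where a little care is needed is the split $-\frac{r-1}{k-2}=-k+\frac{R}{k-2}$, which is what produces the isolated $\frac{R}{k-2}$ term in the second branch. No new inequality is introduced and no hypothesis beyond those of Lemma \ref{mainlemma} is used, so the corollary follows directly.
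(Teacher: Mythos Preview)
Your proposal is correct and is exactly the paper's approach: the corollary is obtained from Lemma~\ref{mainlemma} by the substitution $r=(k-1)^{2}-R$, and the identities you record ($r-k=k^{2}-3k+1-R$, $k^{2}-k+1-r=k+R$, $k^{2}-k-r=k-1+R$, $r-1=k(k-2)-R$, and $b-1-r+2k(k-1)=b+k^{2}+R-2$) are precisely what is needed to rewrite both branches.
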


\begin{lemma}\label{kpunt}
  Let $\mathcal{D}$ be a $2-(v,k,1)$ design and let $\mathcal{S}$ be an Erd\H{o}s-Ko-Rado set on $\mathcal{D}$ 
  with $k_{\mathcal{S}}=k$. Then $|\mathcal{P}'|=k^{2}-k+1$.
\end{lemma}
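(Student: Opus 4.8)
The plan is to localise the argument at a point of maximal multiplicity. Since $k_{\mathcal{S}}=k$, fix a point $P\in\mathcal{P}'$ lying on exactly $k$ blocks of $\mathcal{S}$, say $B_{1},\dots,B_{k}$ (these are all the blocks of $\mathcal{S}$ through $P$, because $k$ is the maximal multiplicity). First I would record the elementary fact, valid in any $2$-design with $\lambda=1$, that two distinct blocks through a common point meet in that point only; hence the $k$ sets $B_{1}\setminus\{P\},\dots,B_{k}\setminus\{P\}$ are pairwise disjoint, each of size $k-1$. Consequently $U:=\bigcup_{i=1}^{k}B_{i}$ has exactly $1+k(k-1)=k^{2}-k+1$ points, and since $B_{1},\dots,B_{k}\in\mathcal{S}$ this already gives $|\mathcal{P}'|\geq k^{2}-k+1$.

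For the reverse inequality I would show that every block of $\mathcal{S}$ lies inside $U$. Let $C\in\mathcal{S}$. If $P\in C$, then $C$ is one of the $B_{i}$, so $C\subseteq U$. If $P\notin C$, then, because $\mathcal{S}$ is an Erd\H{o}s-Ko-Rado set, $C$ meets each $B_{i}$, necessarily in a point of $B_{i}\setminus\{P\}$; as the sets $B_{i}\setminus\{P\}$ are pairwise disjoint, these yield $k$ distinct points of $C$, and since $|C|=k$ the block $C$ consists precisely of those points, so $C\subseteq\bigcup_{i=1}^{k}(B_{i}\setminus\{P\})\subseteq U$. Therefore $\mathcal{P}'\subseteq U$, whence $|\mathcal{P}'|\leq k^{2}-k+1$, and combining the two bounds gives $|\mathcal{P}'|=k^{2}-k+1$.

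The argument has no serious obstacle: the only point needing a moment's care is the case split on whether a block of $\mathcal{S}$ passes through $P$, together with the use of the $\lambda=1$ property to force the $k$ "legs" $B_{i}\setminus\{P\}$ to be disjoint, so that a block of $\mathcal{S}$ avoiding $P$ is swallowed by $U$. No hypothesis beyond $k\geq 2$ is required, and the resulting equality is consistent with (and in fact tightens, for the point set) the inequality $|\mathcal{S}|\leq k_{\mathcal{S}}k-k+1$ of Lemma \ref{upperbounds}.
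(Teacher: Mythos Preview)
Your proof is correct and follows essentially the same approach as the paper's: fix a point $P$ covered by $k$ blocks $B_{1},\dots,B_{k}$ of $\mathcal{S}$, observe that their union $U$ has $k^{2}-k+1$ points, and then show that any block of $\mathcal{S}$ not through $P$ meets each $B_{i}$ in a distinct point and hence is contained in $U$, so $\mathcal{P}'=U$. You are a bit more explicit than the paper in invoking $\lambda=1$ to justify the disjointness of the sets $B_{i}\setminus\{P\}$, but the argument is the same.
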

\begin{proof}
  Since $k_{\mathcal{S}}=k$, we can find a point $P\in\mathcal{P}'$ lying on $k$ blocks of $\mathcal{S}$. Denote these blocks by $B_{1},\dots,B_{k}$ and denote the set of points covered by these blocks by $\mathcal{P}''$. Any block of $\mathcal{S}$ not through $P$ contains a point on each of the blocks $B_{i}$, $i=1,\dots,k$. Since a block contains precisely $k$ points, all points on such a block are contained in $\mathcal{P}''$. Hence, $\mathcal{P}''=\mathcal{P}'$ and
  \[
    |\mathcal{P}''|=|\bigcup^{k}_{i=1}B_{i}|=1+k(k-1)=k^{2}-k+1\;.\qedhere
  \]
\end{proof}


\begin{lemma}\label{k-1punt}
  Let $\mathcal{D}$ be a $2-(v,k,1)$ design and let $\mathcal{S}$ be an Erd\H{o}s-Ko-Rado set on $\mathcal{D}$ 
  with $k_{\mathcal{S}}=k-1$. Write $a'=(k-1)^{2}-|\mathcal{S}|$. If $a'<k-1$, then $k(k-1)\leq|\mathcal{P}'|\leq k(k-1)+\frac{a'^{2}-a'}{k-1-a'}$.
\end{lemma}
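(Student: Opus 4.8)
The plan is to work directly with a deficiency function $\delta$ on $\mathcal{P}'$, rather than with the union of the $k-1$ blocks of $\mathcal{S}$ through a common point. For $Y\in\mathcal{P}'$ let $d(Y)$ denote the number of blocks of $\mathcal{S}$ through $Y$ and set $\delta(Y)=(k-1)-d(Y)$; the hypothesis $k_{\mathcal{S}}=k-1$ gives in particular $d(Y)\leq k-1$, i.e. $\delta(Y)\geq0$ for every $Y\in\mathcal{P}'$. The crucial first step is a local identity: fix $C\in\mathcal{S}$; since $\mathcal{S}$ is an Erd\H{o}s-Ko-Rado set in a $2-(v,k,1)$ design, every block of $\mathcal{S}$ distinct from $C$ meets $C$ in exactly one point, so counting the pairs $(Y,B)$ with $Y\in C$, $B\in\mathcal{S}$ and $Y\in B$ gives $\sum_{Y\in C}d(Y)=k+(|\mathcal{S}|-1)$ and hence
\[
  \sum_{Y\in C}\delta(Y)=k(k-1)-k-|\mathcal{S}|+1=(k-1)^{2}-|\mathcal{S}|=a'.
\]
As every point of $\mathcal{P}'$ lies on some block of $\mathcal{S}$ and all $\delta$-values are nonnegative, this forces the pointwise bound $0\leq\delta(Y)\leq a'$ for all $Y\in\mathcal{P}'$ (so in particular $a'\geq 0$, as also follows from Lemma~\ref{upperbounds}).

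Next I would extract two global identities by double counting. Counting all pairs $(Y,B)$ with $Y\in B\in\mathcal{S}$ gives $\sum_{Y\in\mathcal{P}'}d(Y)=k|\mathcal{S}|$, so
\[
  S_{1}:=\sum_{Y\in\mathcal{P}'}\delta(Y)=(k-1)|\mathcal{P}'|-k|\mathcal{S}|,
\]
while summing the local identity over all $C\in\mathcal{S}$ gives $\sum_{Y\in\mathcal{P}'}\delta(Y)d(Y)=|\mathcal{S}|a'$, whence, using $\delta(Y)d(Y)=(k-1)\delta(Y)-\delta(Y)^{2}$,
\[
  S_{2}:=\sum_{Y\in\mathcal{P}'}\delta(Y)^{2}=(k-1)S_{1}-|\mathcal{S}|a'.
\]
Since the $\delta(Y)$ are integers with $0\leq\delta(Y)\leq a'$ we have $\delta(Y)\leq\delta(Y)^{2}\leq a'\delta(Y)$, and summing yields $S_{1}\leq S_{2}\leq a'S_{1}$. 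The inequality $S_{2}\leq a'S_{1}$ rearranges to $(k-1-a')S_{1}\leq|\mathcal{S}|a'$; substituting $S_{1}=(k-1)|\mathcal{P}'|-k|\mathcal{S}|$ and $|\mathcal{S}|=(k-1)^{2}-a'$ and simplifying gives exactly $|\mathcal{P}'|\leq k(k-1)+\frac{a'^{2}-a'}{k-1-a'}$ (here one divides by $k-1-a'\geq1$, using $a'<k-1$). The inequality $S_{1}\leq S_{2}$ rearranges to $|\mathcal{S}|a'\leq(k-2)S_{1}$; the same substitutions give $|\mathcal{P}'|\geq k(k-1)-\frac{a'(a'-1)}{(k-1)(k-2)}$, and since $a'\leq k-2$ the subtracted fraction is at most $\frac{k-3}{k-1}<1$, so by integrality $|\mathcal{P}'|\geq k(k-1)$ (and for $a'=0$ one even gets $|\mathcal{P}'|=k(k-1)$).

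I expect the only genuine difficulty to be spotting the local block-deficiency identity and, more importantly, noticing that it bounds $\delta$ \emph{at each point} rather than just on average. Without that pointwise bound one is left with only $\sum_{Y}d(Y)=k|\mathcal{S}|$, the degree bound $d(Y)\leq k-1$ and convexity, which together yield no more than $|\mathcal{P}'|\leq k(k-1)+\frac{a'\bigl(a'(k-2)-1\bigr)}{k-1}$, strictly weaker than the asserted upper bound whenever $a'<k-2$. Once $\delta(Y)\leq a'$ is in hand, the remainder is the two displayed double counts, the power-sum pinching $S_{1}\leq S_{2}\leq a'S_{1}$, and a routine algebraic simplification, which I have not carried out in detail here.
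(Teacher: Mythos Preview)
Your proof is correct and, in fact, more streamlined than the paper's. Both arguments hinge on the same pointwise bound $d(Y)\geq k-1-a'$ (equivalently $\delta(Y)\leq a'$), and your upper-bound step is literally the paper's in disguise: your inequality $S_{2}\leq a'S_{1}$ is exactly the paper's $\sum_{i}(i-(k-1-a'))(k-1-i)k_{i}\geq 0$, since for a point with $d(Y)=i$ one has $(i-(k-1-a'))(k-1-i)=(a'-\delta)\delta$. Where you genuinely diverge is the lower bound. The paper argues geometrically: it first shows (using $a'<k-1$) that some block of $\mathcal{S}$ carries two points of degree $k-1$, and then explicitly enumerates at least $(k-2)^{2}+2(k-2)+k=k(k-1)$ distinct points covered by the $2(k-2)+1$ blocks through those two points. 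Your route via $S_{1}\leq S_{2}$ avoids this construction entirely and stays within the same double-counting framework, at the small cost of an integrality step at the end. The payoff of your presentation is uniformity---one local identity and the two power-sum pinchings $S_{1}\leq S_{2}\leq a'S_{1}$ handle both bounds---whereas the paper's geometric argument for the lower bound is perhaps more transparent about which configurations realise $|\mathcal{P}'|=k(k-1)$.
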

\begin{proof}
  First we will prove that there is a block in $\mathcal{S}$ containing at least two points that are on $k-1$ blocks of $\mathcal{S}$. Assume there is no such block and choose a block $C$. At most one point on $C$ belongs to $k-1$ blocks of $\mathcal{S}$. However, all blocks of $\mathcal{S}$ have a nontrivial intersection with $C$, so
  \[
    |\mathcal{S}|\leq 1+(k-2)+(k-1)(k-3)=(k-1)(k-2)\;,
  \]
  hence $a'\geq k-1$, which contradicts the assumption $a'<k-1$.
  \par Let $B_{1}$ be a block of $\mathcal{S}$ through the points $Q_{1}$ and $Q_{2}$, both on $k-1$ blocks of $\mathcal{S}$, and let $B_{1},B_{2},\dots,B_{k-1}$ and $B_{1}=C_{1},C_{2},\dots,C_{k-1}$ be the blocks of $\mathcal{S}$, respectively through $Q_{1}$ and $Q_{2}$. There are $(k-2)^{2}$ points which lie on a block $B_{j}$ and also on a block $C_{j'}$, $2\leq j,j'\leq k-1$; there are $k-2$ points which lie on a block $B_{i}$, but not on a block $C_{i'}$, and there are also $k-2$ points which lie on a block $C_{i}$, but not on a block $B_{i'}$; the block $B_{1}=C_{1}$ contains $k$ points. Hence, $|\mathcal{P}'|\geq (k-2)^{2}+2(k-2)+k=k(k-1)$.
  \par Now, recall the notation $k_{i}$. By standard counting arguments we know that
  \[
    \sum^{k-1}_{i=1}ik_{i}=((k-1)^{2}-a')k\quad\text{ and }\quad\sum^{k-1}_{i=1}i(i-1)k_{i}=((k-1)^{2}-a')(k(k-2)-a')\;.
  \]
  Let $j\in\N\setminus\{0\}$ be the smallest value such that $k_{j}\neq0$ and let $R$ be a point of $\mathcal{P}'$ on $j$ blocks of $\mathcal{S}$. Let $B\in\mathcal{S}$ be a block through $R$. All blocks of $\mathcal{S}$ meet $B$, hence
  \[
    |\mathcal{S}|=(k-1)^{2}-a'\leq 1+(k-1)(k-2)+(j-1)\;.
  \]
  It follows that $j\geq k-1-a'$. Therefore, the following inequality holds:
  \[
    \sum^{k-1}_{i=1}(i-(k-a'-1))(k-1-i)k_{i}\geq0\:.
  \]
  So,
  \begin{align*}
    0&\leq-\sum^{k-1}_{i=1}i(i-1)k_{i}+(2k-a'-3)\sum^{k-1}_{i=1}ik_{i}-(k-a'-1)(k-1)\sum^{k-1}_{i=1}k_{i}\\
    &=-((k-1)^{2}-a')(k(k-2)-a')+(2k-a'-3)((k-1)^{2}-a')k-(k-a'-1)(k-1)\sum^{k-1}_{i=1}k_{i}\\
    &=((k-1)^{2}-a')(k-1)(k-a')-(k-a'-1)(k-1)\sum^{k-1}_{i=1}k_{i}\;.
  \end{align*}
  Consequently,
  \[
    |\mathcal{P}'|=\sum^{k-1}_{i=1}k_{i}\leq\frac{((k-1)^{2}-a')(k-a')}{k-a'-1}=k(k-1)+\frac{a'^{2}-a'}{k-a'-1}\;
  \]
  and the lemma follows.
\end{proof}

\section{Classification results for \texorpdfstring{$k=3$}{k=3}}

For $k=2$, a $2-(v,k,1)$ design is a complete graph $K_{v}$ on $v$ vertices, the edges being the blocks. It can immediately be seen that there are precisely two different types of maximal Erd\H{o}s-Ko-Rado sets on $K_{v}$, namely the point-pencil, which contains $v-1$ blocks, and the triangle, a set $\{\{p_{1},p_{2}\},\{p_{1},p_{3}\},\{p_{2},p_{3}\}\}$ for three points $p_{1},p_{2},p_{3}\in\mathcal{P}$, which contains 3 blocks.
\par So, the first nontrivial case is $k=3$. A $2-(v,3,1)$ design is called a Steiner triple system of size $v$. Steiner triple systems exist if and only if $v\equiv1,3\pmod{6}$ and $v\geq7$. Up to isomorphism, there is only one Steiner triple system for $v=7$, namely the Fano plane, the projective plane of order $2$; there is only one Steiner triple system for $v=9$, namely the affine plane of order $3$; and there are two Steiner triple systems for $v=13$. For more details, we refer the interested reader to \cite[Section II.1, Section II.2]{cd}. 

\begin{theorem}
  Let $\mathcal{D}$ be a $2-(v,3,1)$ design and let $\mathcal{S}$ be a maximal Erd\H{o}s-Ko-Rado set of $\mathcal{D}$. Then $\mathcal{S}$ belongs to one of five types. The maximal Erd\H{o}s-Ko-Rado sets contain $\frac{v-1}{2}$, $4$, $5$, $6$ or $7$ blocks. Each type corresponds to a size and vice versa.
\end{theorem}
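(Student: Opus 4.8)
The plan is to peel off the point-pencil and show that every other maximal Erd\H{o}s-Ko-Rado set lives on a set of exactly $7$ points. So assume $\mathcal{S}$ is maximal but not a point-pencil. By Lemma~\ref{upperbounds} we have $k_{\mathcal{S}}\leq k=3$ and hence $|\mathcal{S}|\leq 3k_{\mathcal{S}}-2\leq 7$. The first step is to eliminate $k_{\mathcal{S}}\leq 2$. If $k_{\mathcal{S}}=1$ the blocks of $\mathcal{S}$ are pairwise disjoint, so $|\mathcal{S}|\leq 1$, and a single block (or the empty set) is never maximal since $r\geq 3$ provides another block through any of its points. If $k_{\mathcal{S}}=2$, then $|\mathcal{S}|\leq 4$; no point lies on all blocks of $\mathcal{S}$ (otherwise the maximal set $\mathcal{S}$ would be a point-pencil), so in particular $3\leq|\mathcal{S}|\leq 4$ and we may choose three blocks $B_{1},B_{2},B_{3}\in\mathcal{S}$ without a common point. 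Since two distinct blocks of a Steiner system meet in exactly one point and none lies on all three, their pairwise intersections are three distinct points. I would then show such a configuration is always extendable, contradicting maximality: when $|\mathcal{S}|=3$ (a ``triangle'' of three lines) the block of $\mathcal{D}$ through one vertex of the triangle and the third point of the opposite block meets all three and, because two distinct blocks share at most one point, differs from $B_{1},B_{2},B_{3}$; when $|\mathcal{S}|=4$ (an O'Nan configuration, with six intersection points $P_{ij}=B_{i}\cap B_{j}$ that are distinct because $k_{\mathcal{S}}=2$, and $B_{i}=\{P_{ij}:j\neq i\}$) the block of $\mathcal{D}$ through $P_{12}$ and $P_{34}$ meets all four and differs from $B_{1},\dots,B_{4}$. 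Hence $k_{\mathcal{S}}=3$.

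By Lemma~\ref{kpunt}, $k_{\mathcal{S}}=3$ forces $|\mathcal{P}'|=7$. Fix a point $P$ on three blocks $B_{1}=\{P,a_{1},a_{2}\}$, $B_{2}=\{P,b_{1},b_{2}\}$, $B_{3}=\{P,c_{1},c_{2}\}$ of $\mathcal{S}$; these are all blocks of $\mathcal{S}$ through $P$, and if there were no further block then $\mathcal{S}$ would be contained in the point-pencil at $P$. Every block of $\mathcal{S}$ other than $B_{1},B_{2},B_{3}$ avoids $P$, hence meets $B_{1},B_{2},B_{3}$ in three distinct points and equals $\{a_{i},b_{j},c_{l}\}$ for some $(i,j,l)$. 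Identify it with $(i,j,l)\in\F_{2}^{3}$. Two such blocks intersect, so they agree in at least one coordinate; being distinct blocks of a $2$-$(v,3,1)$ design they agree in at most one coordinate; so they lie at Hamming distance exactly $2$. A set of vectors of $\F_{2}^{3}$ that are pairwise at distance $2$ is contained in a coset of the even-weight subcode, so there are at most four such blocks and, after relabelling $a_{1}\leftrightarrow a_{2}$ if needed, they form a subset of $\{\{a_{1},b_{1},c_{1}\},\{a_{1},b_{2},c_{2}\},\{a_{2},b_{1},c_{2}\},\{a_{2},b_{2},c_{1}\}\}$. The stabiliser of $P$ and $\{B_{1},B_{2},B_{3}\}$ (permuting the $B_{i}$ and swapping the two non-$P$ points of each) induces the full symmetric group on these four triples, so up to isomorphism $\mathcal{S}$ is determined by $t=|\mathcal{S}|-3\in\{1,2,3,4\}$; for $t=1$ this is the triangle introduced earlier and for $t=4$ it is the Fano plane $\PG(2,2)$. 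Together with the point-pencil this gives the five types, with the five sizes $\frac{v-1}{2},4,5,6,7$, which are pairwise distinct unless $\frac{v-1}{2}\in\{4,5,6,7\}$; the finitely many exceptions, with $v\in\{7,9,13,15\}$ (and no design for $v=11$; for $v=7$ the point-pencil is not even maximal), can be checked directly.

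For the converse (each type is realised, so the size determines the type) I would exhibit one Steiner triple system per type: the point-pencil exists in every design; the Fano plane $\PG(2,2)$ realises $t=4$; in $\AG(2,3)$, a line $B$ together with the three lines through a point $P\notin B$ meeting $B$ is a maximal set of type $t=1$; and for $t=2,3$ one starts from the corresponding $7$-point partial triple system and embeds it into a Steiner triple system in which no additional block meets all of $\mathcal{S}$.

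The main obstacle is the elimination of $k_{\mathcal{S}}=2$: one must argue that both the ``triangle of three lines'' and the O'Nan configuration admit an extending block, the extending block being produced as the unique block through two suitably chosen points, and one must verify carefully---using only that two distinct blocks of a Steiner system meet in at most one point---that this block is new and meets every block already present. Once $|\mathcal{P}'|=7$ is in hand everything is a bounded computation inside a $7$-point set, the one subtlety being the bookkeeping that the configuration depends, up to isomorphism, only on $t=|\mathcal{S}|-3$.
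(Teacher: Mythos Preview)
Your argument is correct and reaches the same conclusion as the paper, but the route is genuinely different. The paper never invokes Lemma~\ref{upperbounds} or Lemma~\ref{kpunt} here; instead it fixes a triangle $B_{1},B_{2},B_{3}$ with vertices $P_{1},P_{2},P_{3}$ and opposite points $Q_{1},Q_{2},Q_{3}$, observes that every further block of $\mathcal{S}$ is either $\{Q_{1},Q_{2},Q_{3}\}$ (if this is a block) or the block $B'_{i}$ through $P_{i}$ and $Q_{i}$, and then splits into cases according to whether $\{Q_{1},Q_{2},Q_{3}\}$ is a block and how many of the third points $R_{i}$ of the $B'_{i}$ coincide. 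This hands-on enumeration produces the sizes $4,5,6,7$ directly and one checks afterwards that the two constructions of size $5$ (and of size $6$) are isomorphic. Your approach instead pushes the problem through the paper's general machinery: you rule out $k_{\mathcal{S}}\leq 2$ by an explicit extension argument, then Lemma~\ref{kpunt} pins $|\mathcal{P}'|=7$, and the $\F_{2}^{3}$ encoding of the non-$P$ blocks together with the parity/Hamming-distance observation gives both the bound $|\mathcal{S}|\leq 7$ and the uniqueness of each type in one stroke via the $S_{4}$-action. The paper's argument is more elementary and self-contained; yours is cleaner on the isomorphism question (no ad hoc matching of constructions) and illustrates that the $k=3$ case already fits the framework of Section~3. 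One small point: your elimination of the $k_{\mathcal{S}}=2$, $|\mathcal{S}|=4$ case silently uses that the six points $P_{ij}$ exhaust all points on the four blocks (so the new block through $P_{12},P_{34}$ really is new), which follows from the incidence count $4\cdot 3=6\cdot 2$; this is worth stating explicitly.
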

\begin{proof}
  If all blocks of $\mathcal{S}$ pass through a common point, then $\mathcal{S}$ is a point-pencil and it contains $\frac{v-1}{2}$ blocks. So, from now on we assume that there is no point on all blocks of $\mathcal{S}$. Let $B_{1},B_{2},B_{3}\in\mathcal{S}$ be three blocks such that $B_{1}\cap B_{2}=\{P_{3}\}$, $B_{1}\cap B_{3}=\{P_{2}\}$ and $B_{2}\cap B_{3}=\{P_{1}\}$, with $P_{1}, P_{2},P_{3}$ three different points. Let $Q_{i}$ be the third point on the block $B_{i}$, $i=1,2,3$. There is precisely one block through the points $P_{i}$ and $Q_{i}$. We denote it by $B'_{i}$ and we denote the third point on this block by $R_{i}$, $i=1,2,3$.
  \par If the three points $Q_{1}$, $Q_{2}$ and $Q_{3}$ are contained in a common block $B'$, then this block has to be contained in $\mathcal{S}$ by the maximality condition. The only other blocks that could be contained in $\mathcal{S}$ are $B'_{1}$, $B'_{2}$ and $B'_{3}$. If all three points $R_{1}$, $R_{2}$ and $R_{3}$ are different, then only one of these blocks belongs to $\mathcal{S}$. We find an Erd\H{o}s-Ko-Rado set of size $4$ or $5$, depending on whether the block $B'$ exists. If two of the points $R_{1}$, $R_{2}$ and $R_{3}$ coincide, then we find an Erd\H{o}s-Ko-Rado set of size $5$ or $6$. If $R_{1}=R_{2}=R_{3}$, then we find an Erd\H{o}s-Ko-Rado set of size $6$ or $7$.
  \par Note that the two constructions of Erd\H{o}s-Ko-Rado sets of size $5$ give rise to isomorphic sets, so there is only one type of Erd\H{o}s-Ko-Rado sets of size $5$. Analogously, there is also only one type of Erd\H{o}s-Ko-Rado sets of size $6$.
\end{proof}

\begin{remark}
  The five types of maximal Erd\H{o}s-Ko-Rado sets in $2-(v,3,1)$ designs are explicitly described in the above theorem. Apart from the point-pencil, these block sets can be embedded in a Fano plane. However, they cannot be extended to a Fano plane by blocks of the design, due to the maximality condition. Note that the Erd\H{o}s-Ko-Rado set of size $7$ is a Fano plane that is embedded in the design.
  \par Since the four types of maximal Erd\H{o}s-Ko-Rado sets different from the point-pencil are determined by their size, we can denote them by $EKR_{i}$, $i=4,\dots,7$, the index referring to their size. Note that each of the maximal Erd\H{o}s-Ko-Rado sets different from the point-pencil, cover precisely $7$ points of the design.
\end{remark}

\begin{remark}\label{Fano}
  In a given $2-(v,3,1)$ design $\mathcal{D}$, not necessarily all five types occur. For example, if $\mathcal{D}$ is the Fano plane ($v=7$), then there is only one maximal Erd\H{o}s-Ko-Rado set, namely $EKR_{7}$, which is the set of all blocks in this case. If $\mathcal{D}$ is not a projective plane, at least two types occur, one of which is the point-pencil.
\end{remark}

We list the results for Erd\H{o}s-Ko-Rado sets on Steiner triple systems of size $v$. For small values of $v$, the results are more detailed.

\begin{theorem}\label{class3}
  Let $\mathcal{D}$ be a $2-(v,3,1)$ design.
  \begin{itemize}
    \item If $v=7$, there is only one maximal Erd\H{o}s-Ko-Rado set in $\mathcal{D}$.
    \item If $v=9$, there are two types of maximal Erd\H{o}s-Ko-Rado sets in $\mathcal{D}$, the point-pencil and $EKR_{4}$. Both contain $4$ blocks.
    \item If $v=13$, there are three types of maximal Erd\H{o}s-Ko-Rado sets in $\mathcal{D}$, the point-pencil, $EKR_{4}$ and $EKR_{5}$. The largest Erd\H{o}s-Ko-Rado sets are the point-pencils.
    \item If $v=15$, the largest Erd\H{o}s-Ko-Rado sets contain 7 blocks. There are 23 nonisomorphic $2-(15,3,1)$ designs containing an $EKR_{7}$, and 57 nonisomorphic $2-(15,3,1)$ designs not containing an $EKR_{7}$. The former have two types of maximal Erd\H{o}s-Ko-Rado sets of size $7$; for the latter all Erd\H{o}s-Ko-Rado sets of size $7$ are point-pencils.
    \item If $v\geq19$, the largest Erd\H{o}s-Ko-Rado sets are point-pencils.
  \end{itemize}
\end{theorem}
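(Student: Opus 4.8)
The plan is to build on the preceding classification of maximal Erd\H{o}s-Ko-Rado sets in a $2-(v,3,1)$ design into five types --- the point-pencil, of size $r=\frac{v-1}{2}$, and the four types $EKR_i$ with $i\in\{4,5,6,7\}$, where $EKR_i$ has $i$ blocks and covers exactly $7$ points --- and then to finish by a size comparison together with a short treatment of the handful of small designs. Since a point-pencil is a maximal Erd\H{o}s-Ko-Rado set whenever $r>k=3$, i.e. whenever $v\geq 9$, and since $r=\frac{v-1}{2}\geq 9>7\geq|EKR_i|$ as soon as $v\geq 19$, the point-pencils are strictly the largest maximal Erd\H{o}s-Ko-Rado sets for all $v\geq 19$, which settles the last bullet (and, as every Erd\H{o}s-Ko-Rado set extends to a maximal one, the largest Erd\H{o}s-Ko-Rado sets are then exactly the point-pencils). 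The same comparison reduces the remaining bullets to deciding which of the $EKR_i$ occur: for $v=13$ (where $r=6$) one must still exclude $EKR_6$ and $EKR_7$ to conclude that the point-pencils are largest, and for $v=15$ (where $r=7$) one must determine precisely which designs admit an $EKR_7$.

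For $v=7$ the design is the Fano plane and by Remark \ref{Fano} its only maximal Erd\H{o}s-Ko-Rado set is the set of all $7$ blocks, i.e. $EKR_7$. For $v=9$ the design is the affine plane $\AG(2,3)$ of order $3$; by Remark \ref{affine} every maximal Erd\H{o}s-Ko-Rado set here has exactly $4$ blocks, so among the five types only the point-pencil ($r=4$) and $EKR_4$ can occur, and both do --- the point-pencil because $r=4>3$, and an $EKR_4$ because, again by Remark \ref{affine}, the maximal Erd\H{o}s-Ko-Rado sets of $\AG(2,3)$ are not all isomorphic. For $v=13$ and $v=15$ I would use that there are only $2$ and $80$ isomorphism classes of Steiner triple systems of these orders, so the occurrence of each type can be decided by a finite check. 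Here the key structural remark is that an $EKR_7$ is precisely a subsystem of order $7$ embedded in the design, and a Steiner triple system of order $v$ can contain a sub-$\mathrm{STS}(7)$ only if $v\geq 2\cdot 7+1=15$ (pick a point off the subsystem and look at the $7$ blocks joining it to the subsystem). This excludes $EKR_7$ for $v=13$ outright, and inspecting the two systems of order $13$ then also excludes $EKR_6$, leaving the point-pencil, $EKR_4$ and $EKR_5$, all of which do occur, with the point-pencils of size $6$ the largest. For $v=15$ the same remark reduces the statement about size-$7$ maximal Erd\H{o}s-Ko-Rado sets to: among the $80$ Steiner triple systems of order $15$, exactly those having a subsystem of order $7$ admit an $EKR_7$; running through the known list (see \cite{cd}) yields the split into $23$ such designs --- which, by the five-type classification, then have two types of maximal Erd\H{o}s-Ko-Rado set of size $7$, the point-pencils and the $EKR_7$'s --- and $57$ designs in which the point-pencils are the only maximal Erd\H{o}s-Ko-Rado sets of size $7$.

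The main obstacle is the $v=15$ statement: the exact numbers $23$ and $57$ are not going to drop out of a clean structural argument but rest on the classical enumeration of the $80$ Steiner triple systems of order $15$ and a check of which of them carry a sub-$\mathrm{STS}(7)$ --- in practice a small computer search, or an appeal to the subsystem data tabulated in the literature. The $v=13$ case is of the same nature but trivial in scale: only two systems, with $EKR_7$ already excluded a priori, so the remaining content ($EKR_6$ does not occur while $EKR_4$ and $EKR_5$ do) is a quick hand verification. One minor point worth flagging is that ``largest'' in the $v=13$ and $v=15$ statements is a claim about the point-pencils (of size $r=\frac{v-1}{2}$), so one genuinely has to rule out every non-point-pencil type of size at least $r$ --- namely $EKR_6$ and $EKR_7$ when $v=13$ (where $r=6$), and $EKR_7$ when $v=15$ (where $r=7$, so the tie with the point-pencils must be described) --- rather than merely excluding strictly larger Erd\H{o}s-Ko-Rado sets.
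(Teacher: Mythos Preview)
Your proposal is correct and tracks the paper's proof closely for $v=7$, $v=9$, $v=15$, and $v\geq19$. The one genuine difference is the handling of $v=13$.

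The paper disposes of $v=13$ by invoking Lemma~\ref{mainlemma} (the main counting lemma from Section~3) with $k=3$, $b=1$, $r=6$: since every non-point-pencil maximal Erd\H{o}s-Ko-Rado set covers exactly $7=k(k-1)+1$ points, the lemma applies with $b=1$ and gives $|\mathcal{S}|\leq 5$, eliminating $EKR_6$ and $EKR_7$ simultaneously and uniformly for both $\mathrm{STS}(13)$'s. It then exhibits explicit $EKR_4$ and $EKR_5$ examples valid in both designs. Your route is more elementary: you kill $EKR_7$ via the doubling bound $v\geq 2\cdot 7+1$ for a proper sub-$\mathrm{STS}(7)$, and then appeal to a direct inspection of the two $\mathrm{STS}(13)$'s to exclude $EKR_6$ and to witness $EKR_4$, $EKR_5$. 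Both arguments are valid; the paper's has the advantage of actually exercising the machinery built in Section~3 and of ruling out $EKR_6$ without any case-by-case check, while yours avoids that machinery entirely at the cost of a (small) by-hand search. If you want a self-contained write-up you should either carry out that $EKR_6$ check explicitly or adopt the paper's counting-lemma shortcut; as written, ``inspecting the two systems'' is a promissory note rather than a proof step.
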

\begin{proof}
  The case $v=7$ has been treated in Remark \ref{Fano}. If $v=9$, then $\mathcal{D}$ is an affine plane of order $3$. One can see immediately that only two of the above types of maximal Erd\H{o}s-Ko-Rado sets occur, the point-pencil and the smallest one of the others, the $EKR_{4}$. Both contain four blocks. Compare this result with Remark \ref{affine}
  \par If $v=13$, there are two nonisomorphic $2-(v,3,1)$ designs. Their point sets can be denoted by $\{0,1,\dots,9,a,b,c\}$. Using \cite[Table II.1.27]{cd}, we can write the block sets as in Table \ref{v13}.
  \begin{table}[!h]
    \begin{tabularx}{\textwidth}{|X|X|X|X|X|X|X|X|X|X|X|X|X|X|X|X|X|X|X|X|X|X|X|X|X|X|}
       \hline
       0&0&0&0&0&0&1&1&1&1&1&2&2&2&2&2&3&3&3&4&4&4&5&5&5&6\\
       1&3&5&7&9&b&3&4&6&9&a&3&4&6&7&8&6&7&8&6&8&a&7&8&9&7\\
       2&4&6&8&a&c&5&7&8&b&c&9&5&a&c&b&b&a&c&c&9&b&b&a&c&9\\
       \hline
     \end{tabularx}
     
     \vspace*{0.3cm}
     
     \begin{tabularx}{\textwidth}{|X|X|X|X|X|X|X|X|X|X|X|X|X|X|X|X|X|X|X|X|X|X|X|X|X|X|}
       \hline
       0&0&0&0&0&0&1&1&1&1&1&2&2&2&2&2&3&3&3&4&4&4&5&5&5&6\\
       1&3&5&7&9&b&3&4&6&9&a&3&4&6&7&8&6&7&8&6&8&a&7&8&9&7\\
       2&4&6&8&a&c&5&7&8&b&c&9&5&a&b&c&b&c&a&c&9&b&a&b&c&9\\
       \hline
     \end{tabularx}
     \caption{Block sets}
     \label{v13}
  \end{table}
  \par We know that the point-pencil contains $6$ blocks. By Theorem \ref{mainlemma}, applied for $k=3$, $b=1$ and $r=6$, we know that any other maximal Erd\H{o}s-Ko-Rado set contains at most $5$ blocks. So, on both $2-(13,3,1)$ designs, at most three types of maximal Erd\H{o}s-Ko-Rado sets occur. Using the above notation, the two sets $\{\{0,1,2\},\{0,3,4\},\{1,3,5\},\{2,3,9\},\{2,4,5\}\}$ and $\{\{0,1,2\},\{0,3,4\},\{0,9,a\},\{2,3,9\}\}$ are maximal Erd\H{o}s-Ko-Rado sets for both $2-(13,3,1)$ designs. Hence, there are precisely three types of maximal Erd\H{o}s-Ko-Rado sets on $2-(13,3,1)$ designs.
  \par There are 80 nonisomorphic $2-(15,3,1)$ designs, see \cite[Table II.1.28]{cd} for an overview. The point-pencil contains $7$ blocks in these designs. In \cite[Table II.1.29]{cd} it is mentioned which of these 80 designs contains a Fano plane as subdesign; 23 of them do, and 57 do not. The statement follows.
  \par If $v\geq19$, then $r\geq9$, hence the point-pencil contains more blocks than the Erd\H{o}s-Ko-Rado sets of type $EKR_{i}$, $i=4,\dots,7$.
\end{proof}

Note that one of the 23 different $2-(15,3,1)$ designs having a Fano plane as subdesign, is the design consisting of the points and lines of $\PG(3,2)$. Also note that the last part of Theorem \ref{class3} is a special case of Corollary \ref{partialgeometry}.

\section{Classification results for \texorpdfstring{$k\geq4$}{k>=4}}

In this section we present the main classification theorems for Erd\H{o}s-Ko-Rado sets in $2-(v,k,1)$ designs. In Theorem \ref{rands} we will provide a proof for the result claimed in \cite{ran} about $2-(v,k,1)$ designs with large $v$. Theorem \ref{maintheorem} contains a classification theorem for $2-(v,k,1)$ designs with $v$ a little smaller. A survey result can be found in Corollary \ref{overview}.
\par In this section we will use the parameter $k_{\mathcal{S}}$, introduced in Notation \ref{nota}.

\begin{theorem}\label{rands}
  Let $\mathcal{D}$ be a $2-(v,k,1)$ design and let $\mathcal{S}$ be an Erd\H{o}s-Ko-Rado set on $\mathcal{D}$. If $r\geq k^{2}-k+1$, then $|\mathcal{S}|\leq r$. If $r=\frac{v-1}{k-1}> k^{2}-k+1$ and $|\mathcal{S}|=r$, then $\mathcal{S}$ is a point-pencil. 
\end{theorem}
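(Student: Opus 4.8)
The plan is to leverage Lemma \ref{upperbounds} together with Lemma \ref{kpunt} and Lemma \ref{k-1punt}, reducing everything to a case analysis on $k_{\mathcal{S}}$. First, if $\mathcal{S}$ is not a point-pencil, then either it is not maximal, in which case it extends to a maximal Erd\H{o}s-Ko-Rado set that is still not a point-pencil, so we may as well assume $\mathcal{S}$ is maximal. By Lemma \ref{upperbounds} we then have $k_{\mathcal{S}}\leq k$, and $|\mathcal{S}|\leq k_{\mathcal{S}}k-k+1\leq k^2-k+1\leq r$, which already gives the first assertion $|\mathcal{S}|\leq r$ whenever $r\geq k^2-k+1$. (If $\mathcal{S}$ \emph{is} a point-pencil then $|\mathcal{S}|=r$ trivially, so the bound $|\mathcal{S}|\leq r$ holds in all cases.)

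For the second, sharper assertion, assume $r>k^2-k+1$ and $|\mathcal{S}|=r$; I must show $\mathcal{S}$ is a point-pencil. Suppose not. Then, passing to a maximal extension if necessary, $\mathcal{S}$ is a maximal non-point-pencil with $|\mathcal{S}|=r>k^2-k+1$. But then Lemma \ref{upperbounds} forces $k_{\mathcal{S}}\leq k$ and hence $|\mathcal{S}|\leq k^2-k+1<r$, a contradiction. The only subtlety is that extending $\mathcal{S}$ to a maximal set $\mathcal{S}'$ makes $|\mathcal{S}'|\geq|\mathcal{S}|=r>k^2-k+1$, and $\mathcal{S}'$ is still not a point-pencil (a point-pencil cannot properly contain another Erd\H{o}s-Ko-Rado set of the same or larger size, and in fact a set properly contained in no larger Erd\H{o}s-Ko-Rado set that equals $r$ when $r>k$ must be the full pencil); so the bound $|\mathcal{S}'|\leq k^2-k+1$ from Lemma \ref{upperbounds} applies and contradicts $|\mathcal{S}'|>k^2-k+1$.

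I expect the main obstacle to be purely bookkeeping: making the reduction to the maximal case airtight, i.e.\ checking that if $\mathcal{S}$ is not a point-pencil and $|\mathcal{S}|=r$ then some maximal extension is also not a point-pencil with size $>k^2-k+1$. The cleanest route is to observe that a point-pencil has exactly $r$ blocks (since $r=\frac{v-1}{k-1}$ blocks pass through a fixed point), so if $\mathcal{S}$ is strictly contained in a maximal set $\mathcal{S}'$, then $|\mathcal{S}'|\geq r+1>r$, and no Erd\H{o}s-Ko-Rado set can have more than $r$ blocks through a common point — hence $\mathcal{S}'$ is not a point-pencil, contradicting $|\mathcal{S}'|\leq k^2-k+1<r+1$ from Lemma \ref{upperbounds}. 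If instead $\mathcal{S}$ is already maximal, Lemma \ref{upperbounds} applies directly. Either way we reach the contradiction, and one could alternatively invoke Lemma \ref{kpunt} and Lemma \ref{k-1punt} to control $|\mathcal{P}'|$ if a more refined argument were needed, but for this theorem the coarse bound $k_{\mathcal{S}}\leq k$ from Lemma \ref{upperbounds} suffices.
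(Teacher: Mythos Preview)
Your proposal is correct and follows essentially the same route as the paper: pass to a maximal $\mathcal{S}$, and if it is not a point-pencil invoke Lemma~\ref{upperbounds} to conclude $k_{\mathcal{S}}\leq k$ and hence $|\mathcal{S}|\leq k^{2}-k+1\leq r$. One small correction: the assertion in your first paragraph that a non-maximal, non-point-pencil $\mathcal{S}$ always extends to a maximal set \emph{that is still not a point-pencil} is false in general (any proper subset of a point-pencil is a counterexample), but this is harmless here since if the maximal extension happens to be a point-pencil you immediately get $|\mathcal{S}|\leq r$; your later discussion handles the reduction correctly, and as you rightly note, Lemmas~\ref{kpunt} and~\ref{k-1punt} are not needed for this theorem.
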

\begin{proof}
  Without loss of generality, we can assume that $\mathcal{S}$ is a maximal Erd\H{o}s-Ko-Rado set. If $\mathcal{S}$ is a point-pencil, then $|\mathcal{S}|=r$. So, from now on, we can assume that $\mathcal{S}$ is not a point-pencil. By Lemma \ref{upperbounds} we know that $k_{\mathcal{S}}\leq k$. However, by the same lemma we also know that $|\mathcal{S}|\leq k^{2}-k+1$, if $k_{\mathcal{S}}\leq k$.
  \par Both statements in the theorem immediately follow.
\end{proof}

As mentioned at the end of Section \ref{sec:introduction}, there are $2-(v,k,1)$ designs with $r=k^{2}-k+1$, having a second type of Erd\H{o}s-Ko-Rado sets of size $r$.
\par Now, we look at Erd\H{o}s-Ko-Rado sets in $2-(v,k,1)$ designs with $r\leq k^{2}-k$. A classification result will be proven in Theorem \ref{maintheorem}. Before we prove some preparatory lemmas. In these lemmas we distinguish between the case $4\leq k\leq13$ and the case $k\geq14$.
\par First, we have a look at the small cases, $4\leq k\leq 13$.

\begin{table}[h]
  \centering
  \begin{tabular}{|c||c|c|c|c|c|c|c|c|c|c|}
    \hline
    $k$     & 4 & 5 & 6 & 7 & 8 & 9 & 10 & 11 & 12 & 13 \\ \hline
    $R_{k}$ & 1 & 2 & 3 & 4 & 4 & 5 & 6 & 7 & 8 & 9 \\ \hline
  \end{tabular}
  \caption{The values $R_{k}$.}
  \label{tab:4-13}
\end{table}

\begin{lemma}\label{interval4}
  Let $\mathcal{D}$ be a $2-(v,k,1)$ design, $4\leq k\leq13$, and denote $(k-1)^{2}-r=(k-1)^{2}-\frac{v-1}{k-1}$ by $R$. Let $\mathcal{S}$ be an Erd\H{o}s-Ko-Rado set on $\mathcal{D}$ with $k_{\mathcal{S}}=k-1$. If $0\leq R\leq R_{k}$, then $|\mathcal{S}|<(k-1)^{2}-R$.
\end{lemma}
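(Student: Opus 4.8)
\emph{Proof plan.} I would argue by contradiction: assume $|\mathcal{S}|\ge(k-1)^{2}-R$ and derive an impossible numerical inequality. Put $a'=(k-1)^{2}-|\mathcal{S}|$, so $a'\le R$; since $k_{\mathcal{S}}=k-1$, Lemma~\ref{upperbounds} gives $|\mathcal{S}|\le k_{\mathcal{S}}k-k+1=(k-1)^{2}$, hence $a'\ge0$. From Table~\ref{tab:4-13} one checks $R_{k}\le k-2$ for every $k\in\{4,\dots,13\}$, so $a'\le R\le R_{k}<k-1$ and Lemma~\ref{k-1punt} applies: writing $|\mathcal{P}'|=k(k-1)+b$ we get $0\le b\le\frac{a'^{2}-a'}{k-1-a'}$. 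Since the map $t\mapsto\frac{t^{2}-t}{k-1-t}$ is nondecreasing on $\{0,1,\dots,k-2\}$ (its numerator $t(t-1)$ is nonnegative and nondecreasing there, its denominator $k-1-t$ is positive and decreasing), the inequality $a'\le R$ upgrades this to $0\le b\le\beta:=\frac{R^{2}-R}{k-1-R}$; note $\beta=0$ when $R\in\{0,1\}$, which matches the fact that Lemma~\ref{k-1punt} then forces $b=0$.

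Next I would invoke Corollary~\ref{mainlemmaR}, which gives $|\mathcal{S}|\le\max\{f_{1}(b,R),f_{2}(b,R)\}$ for the two explicit expressions $f_{1},f_{2}$ appearing there. Both are strictly increasing in $b$ for $b\ge0$ (when $k\ge4$, $R\ge0$), since $\partial_{b}f_{1}=\frac{2b+2k+2R-3}{(k-1)(k-2)}>0$ and $\partial_{b}f_{2}=\frac{2b+k^{2}+R-2}{k(k-2)}>0$. Hence it suffices to verify, for each $k\in\{4,\dots,13\}$ and each integer $R$ with $0\le R\le R_{k}$, that $f_{1}(\beta,R)<(k-1)^{2}-R$ and $f_{2}(\beta,R)<(k-1)^{2}-R$. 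After substituting $b=\beta=\frac{R^{2}-R}{k-1-R}$ each left-hand side becomes an explicit rational number in $k$ and $R$, so these finitely many inequalities can be checked directly; they contradict $|\mathcal{S}|\ge(k-1)^{2}-R$, which proves the lemma.

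The only substantive step is this last finite check, and it is precisely what pins down the entries of Table~\ref{tab:4-13}: $R_{k}$ is the largest value of $R$ for which $\max\{f_{1}(\beta,R),f_{2}(\beta,R)\}$ stays below $(k-1)^{2}-R$. For instance, for $k=8$ the inequality holds at $R=4$ but fails at $R=5$, since then $\beta=10$ and $f_{1}(10,5)$ already exceeds $45-5$, which is why $R_{8}=4$ rather than $5$; and for $k=7$, $R=4$ the two sides differ by only a fraction of a unit, so the thresholds are essentially sharp. The main obstacle is therefore purely computational: one must run these ten small case analyses carefully, treating the cases $R\in\{0,1\}$ separately (there the argument is trivial because $b=0$), since the slack near the stated thresholds is slim.
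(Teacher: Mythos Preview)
Your argument is correct and follows the same route as the paper: reduce to $a'\le R$ via Lemma~\ref{upperbounds}, bound $b$ by Lemma~\ref{k-1punt}, feed this into Corollary~\ref{mainlemmaR}, and finish with a finite check over $4\le k\le13$, $0\le R\le R_{k}$. Your monotonicity observations (on $t\mapsto t(t-1)/(k-1-t)$ and on $\partial_{b}f_{i}$) are a mild streamlining the paper omits, reducing the check to the single value $b=\beta$; note one arithmetic slip in your illustrative example for $k=8$, where $(k-1)^{2}-R=49-5=44$, not $45-5$.
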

\begin{proof}
  We denote $(k-1)^{2}-|\mathcal{S}|$ by $a'$, as in Lemma \ref{k-1punt}. By Lemma \ref{upperbounds} we know that $a'\geq0$. If $R<a'$, then $|\mathcal{S}|<(k-1)^{2}-R$. So, now we assume that $a'\leq R$. Since $R_{k}<k-1$, also $a'<k-1$ and we know by Lemma \ref{k-1punt} that $k(k-1)\leq|\mathcal{P}'|\leq k(k-1)+\frac{R(R-1)}{k-1-R}$. Denoting $|\mathcal{P}'|-k(k-1)$ by $b$, it follows that $0\leq b\leq \frac{R(R-1)}{k-1-R}$. By Lemma \ref{mainlemmaR} we know that
  \begin{multline*}
    |\mathcal{S}|\leq \max\left\{k^{2}-k+1-2\frac{(k^{2}-3k+1-R)(k+R)}{k(k-2)}+\frac{b(b-1)}{(k-1)(k-2)}+2\frac{(b-1)(k-1+R)}{(k-1)(k-2)},\right.\\ \left.k-1+R+\frac{R}{k-2}+\frac{b(b+k^{2}+R-2)}{k(k-2)}\right\}\;.
  \end{multline*}
  By hand or by using a computer algebra package, it can be checked that the above maximum is smaller than $(k-1)^{2}-R=r$ for all choices of $k,R,b$ fulfilling $4\leq k\leq 13$, $0\leq R\leq R_{k}$ and $0\leq b\leq \frac{R(R-1)}{k-1-R}$.
\end{proof}

Extending the calculations in the above proof, we can see that the values $R_{k}$ are optimal; enlarging one of these values leads to a contradiction.
\par Now, we look at the more general case $k\geq14$. We start with some inequalities which we will need in the proof of Lemma \ref{interval14}

\begin{lemma}\label{berekening14}
  Choose $b,c,k\in\N$, with $k\geq14$, $1\leq c\leq\frac{4}{3}k\sqrt{k}-2k-2\sqrt{k}$ and $0\leq b\leq c$. Then
  \[
    \frac{k^{3}-7k^{2}+10k-2bk-2-\sqrt{D(b,k)}}{4(k-1)}<\frac{1-c+\sqrt{(c-1)^{2}+4c(k-1)}}{2}\;,
  \]
  with $D(b,k)=(k^{3}-3k^{2}-2bk+6k-2)^{2}-8k(k-1)(b-1)(b-2)$. Furthermore, for $k\in\N$ with $k\geq14$,
  \[
    \frac{k^{3}-7k^{2}+10k-2-\sqrt{D(0,k)}}{4(k-1)}<0\;.
  \]
\end{lemma}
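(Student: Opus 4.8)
The plan is to treat the two displayed inequalities separately; the second (the case $b=0$) is an easy special case of the argument for the first. For the second inequality, since $4(k-1)>0$ it is equivalent to $k^{3}-7k^{2}+10k-2<\sqrt{D(0,k)}$, and for $k\ge14$ the left-hand side is positive. As $D(0,k)=(k^{3}-3k^{2}+6k-2)^{2}-16k(k-1)$, squaring turns the claim into $(k^{3}-3k^{2}+6k-2)^{2}-(k^{3}-7k^{2}+10k-2)^{2}>16k(k-1)$; the left-hand side is a difference of squares equal to $8k(k-1)(k^{3}-5k^{2}+8k-2)$, so after dividing by $8k(k-1)$ the claim reduces to $k^{3}-5k^{2}+8k-4>0$, which holds for all $k\ge5$.

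For the first inequality I would rationalise the left-hand side. Put $B(b)=k^{3}-7k^{2}+10k-2bk-2$ and $A(b)=k^{3}-3k^{2}+6k-2bk-2$, so that $A(b)-B(b)=4k(k-1)$; a short computation then gives $B(b)^{2}-D(b,k)=8k(k-1)\bigl(b^{2}+(2k-3)b-k^{3}+5k^{2}-8k+4\bigr)$, hence, with $C_{b}:=k\bigl(b^{2}+(2k-3)b-k^{3}+5k^{2}-8k+4\bigr)$,
\[
\frac{k^{3}-7k^{2}+10k-2bk-2-\sqrt{D(b,k)}}{4(k-1)}=\frac{2C_{b}}{B(b)+\sqrt{D(b,k)}}.
\]
On the range $0\le b\le c$, $1\le c\le\frac{4}{3}k\sqrt{k}-2k-2\sqrt{k}$, $k\ge14$ one checks that $A(b)>0$, $B(b)>0$ and $D(b,k)\ge0$ (here the full bound on $c$, not merely its leading term $\frac{4}{3}k\sqrt{k}$, is needed), so the fraction is well defined, and that the right-hand side $\rho(c):=\tfrac12\bigl(1-c+\sqrt{(c-1)^{2}+4c(k-1)}\bigr)$ is strictly positive because $c\ge1$. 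If $C_{b}\le0$ the fraction is $\le0<\rho(c)$ and we are done; so assume $C_{b}>0$, which forces $b\ge2$ (already $C_{0}<0$ and $C_{1}<0$ for $k\ge14$). Since $C_{b}$ is increasing in $b$ while $B(b)+\sqrt{D(b,k)}$ is positive and decreasing in $b$ for $b\ge2$, the fraction is increasing in $b$ on this regime, so it suffices to prove the inequality for $b=c$.

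Thus the heart of the matter is the inequality $2C_{c}<\rho(c)\bigl(B(c)+\sqrt{D(c,k)}\bigr)$ in the case $C_{c}>0$. I would remove the two square roots by the elementary lower bounds
\[
\sqrt{D(c,k)}\ \ge\ A(c)-\frac{8k(k-1)(c-1)(c-2)}{A(c)},\qquad \rho(c)\ \ge\ \frac{c(k-1)}{c+k-2},
\]
the first from $\sqrt{X^{2}-Y}\ge X-Y/X$ (valid since $A(c)>0$ and $D(c,k)\ge0$), the second from $\sqrt{(c-1)^{2}+4c(k-1)}\le(c-1)+2(k-1)$. Since everything in sight is positive, cross-multiplying reduces the claim to a polynomial inequality in $c$ and $k$; moreover the constraint $c\le\frac{4}{3}k\sqrt{k}-2k-2\sqrt{k}$ is itself equivalent to the polynomial inequality $9(c+2k)^{2}\le4k(2k-3)^{2}$, so the whole problem becomes semi-algebraic in $c$ and $k$. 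One then verifies this polynomial inequality on $\{\,c\ge1,\ k\ge14,\ 9(c+2k)^{2}\le4k(2k-3)^{2}\,\}$ — for instance by showing it is monotone in $c$ on this region and reducing to a one-variable estimate at the boundary $9(c+2k)^{2}=4k(2k-3)^{2}$, or directly with a computer algebra package as in the proof of Lemma~\ref{interval4}. (Alternatively one may clear the square roots by squaring the original inequality twice, at the cost of a higher-degree polynomial.)

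The main obstacle is exactly this polynomial verification, and it cannot be carried out with crude estimates: at the extreme value $c\approx\frac{4}{3}k\sqrt{k}$ the two sides of the original inequality behave asymptotically like $\tfrac79 k$ and $k-1$ respectively, and replacing $\tfrac43$ by a constant $\alpha$ changes the left side to $(\alpha^{2}-1)k$, so the inequality becomes asymptotically tight exactly at $\alpha=\sqrt{2}$. The value $\tfrac43$ sits comfortably below $\sqrt{2}$, leaving just enough slack for the lower-order terms to go the right way for every $k\ge14$, and the bounds used in the reduction must be sharp enough to preserve this slack.
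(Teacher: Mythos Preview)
Your argument is sound and arrives at the same conclusion, but by a somewhat different route than the paper. For the second displayed inequality your treatment is essentially identical to the paper's. For the first, both you and the paper reduce to $b=c$ by showing the left-hand side is nondecreasing in $b$: the paper does this by directly estimating $\sqrt{D(b,k)}-\sqrt{D(b+1,k)}\ge 2k$, while you rationalise the fraction as $2C_{b}/(B(b)+\sqrt{D(b,k)})$ and observe that the numerator is increasing and the denominator decreasing on the relevant range --- a cleaner way to get the monotonicity. After the reduction to $b=c$, the approaches diverge more substantially. The paper squares once to reach a single inequality of the form $\sqrt{(c-1)^{2}+4c(k-1)}\sqrt{D(c,k)}>\text{(polynomial)}$, then proves that the left side is \emph{concave} in $c$ on $[0,C_{k}]$ (via its second derivative), hence dominates the secant line, and compares slopes to reduce everything to the single endpoint $c=C_{k}$; only a one-variable check in $k$ remains. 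You instead strip both square roots with the elementary bounds $\sqrt{X^{2}-Y}\ge X-Y/X$ and $\rho(c)\ge c(k-1)/(c+k-2)$, landing on a two-variable polynomial inequality which you then defer to a monotonicity/boundary argument or computer algebra. Your bounds do preserve enough slack for this to go through (the leading behaviour at $c=C_{k}$ is $\tfrac{4}{9}k^{4}$ versus $\tfrac{3}{2}k^{7/2}$ in the resulting comparison, safely positive for $k\ge14$), so the plan is viable; the trade-off is that the paper's concavity trick gives a tighter, one-variable endpoint check without a further polynomial analysis in $c$, whereas your method avoids computing any second derivatives.
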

\begin{proof}
  First, note that $D(b,k)\geq0$ for all $0\leq b\leq \frac{4}{3}k\sqrt{k}-2k-2\sqrt{k}=:C_{k}$, hence the above functions exist.
  \par The second part of the lemma is immediate, so we focus on the first part. Note that
  \begin{align*}
    &\frac{k^{3}-7k^{2}+10k-2(b+1)k-2-\sqrt{D(b+1,k)}}{4(k-1)}-\frac{k^{3}-7k^{2}+10k-2bk-2-\sqrt{D(b,k)}}{4(k-1)}\\
    =&\frac{\sqrt{D(b,k)}-\sqrt{D(b+1,k)}-2k}{4(k-1)}\;.
  \end{align*}
  Now,
  \begin{align*}
    &\frac{\sqrt{D(b,k)}-\sqrt{D(b+1,k)}-2k}{4(k-1)}\geq0\\
    \Leftrightarrow\qquad&\sqrt{D(b,k)}-\sqrt{D(b+1,k)}\geq2k\\
    \Leftrightarrow\qquad&D(b,k)-D(b+1,k)\geq2k\left(\sqrt{D(b,k)}+\sqrt{D(b+1,k)}\right)\\
    \Leftrightarrow\qquad&2k^{3}-6k^{2}+4bk+2k-8b+4\geq\sqrt{D(b,k)}+\sqrt{D(b+1,k)}\;.
  \end{align*}
  This final inequality is valid since $\sqrt{D(b,k)}+\sqrt{D(b+1,k)}\leq2k^{3}-6k^{2}-4bk+10k-4$. These calculations show that
  \[
    \frac{k^{3}-7k^{2}+10k-2(b+1)k-2-\sqrt{D(b+1,k)}}{4(k-1)}\geq\frac{k^{3}-7k^{2}+10k-2bk-2-\sqrt{D(b,k)}}{4(k-1)}\;.
  \]
  Hence, it is sufficient to prove that
  \[
    \frac{k^{3}-7k^{2}+10k-2ck-2-\sqrt{D(c,k)}}{4(k-1)}<\frac{1-c+\sqrt{(c-1)^{2}+4c(k-1)}}{2}\;.
  \]
  Since $c\leq\frac{4}{3}k\sqrt{k}-2k-2\sqrt{k}<\frac{k^{3}-7k^{2}+8k}{2}$ for $k\geq14$, this is equivalent to
  \begin{align}
    &\left(2(k-1)\sqrt{(c-1)^{2}+4c(k-1)}+\sqrt{D(c,k)}\right)^{2}>(k^{3}-7k^{2}+8k-2c)^{2} \nonumber\\
    \Leftrightarrow\qquad &\sqrt{(c-1)^{2}+4c(k-1)}\sqrt{D(c,k)}>-2k^{4}+(9+c)k^{3}-(7c+9)k^{2}+(14c-2)k+2-6c\;.\label{k14}
  \end{align}
  Considering the left-hand side of the inequality \eqref{k14} as a function of $c$, for a fixed value of $k$, we can compute its second derivative. We find that this second derivative is negative on $\left[0,C_{k}\right]$, hence the function on the left-hand side is concave on $\left[0,C_{k}\right]$. Therefore, it dominates the function
  \[
    c\mapsto\sqrt{D(0,k)}+c\frac{\sqrt{(C_{k}-1)^{2}+4C_{k}(k-1)}\sqrt{D(C_{k},k)}-\sqrt{D(0,k)}}{C_{k}}\;.
  \]
  The slope of this line is smaller than $k^{3}-7k^{2}+14k-6$. So, we only need to check the inequality for the largest considered value for $c$, namely $C_{k}$. It turns out that this inequality is valid if $k\geq14$.
\end{proof}

In the final step of the argument we needed that $k\geq14$. This is why the cases $4\leq k\leq13$ had to be treated separately. We now discuss $2-(v,k,1)$ designs with $k_{\mathcal{S}}=k-1$. These are the hardest case in the proof of Theorem \ref{maintheorem}.

\begin{lemma}\label{interval14}
  Let $\mathcal{D}$ be a $2-(v,k,1)$ design, $k\geq14$, and denote $(k-1)^{2}-r=(k-1)^{2}-\frac{v-1}{k-1}$ by $R$. Let $\mathcal{S}$ be an Erd\H{o}s-Ko-Rado set on $\mathcal{D}$ with $k_{\mathcal{S}}=k-1$. If $0\leq R<\sqrt{k-1}$ or $\frac{1-c+\sqrt{(c-1)^{2}+4c(k-1)}}{2}\leq R<\frac{-c+\sqrt{c^{2}+4(c+1)(k-1)}}{2}$ for a value $c\in\N$, with $1\leq c\leq\frac{4}{3}k\sqrt{k}-2k-2\sqrt{k}$, then $|\mathcal{S}|<(k-1)^{2}-R$.
\end{lemma}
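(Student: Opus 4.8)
Proof proposal for Lemma~\ref{interval14}.

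The plan is to run the scheme of Lemma~\ref{interval4}, but replace the finite computer check by the explicit estimates of Lemma~\ref{berekening14}; this is exactly why the threshold $k\geq14$ and the quantity $C_{k}:=\frac{4}{3}k\sqrt{k}-2k-2\sqrt{k}$ occur. Write $a'=(k-1)^{2}-|\mathcal{S}|$ and argue by contradiction, assuming $|\mathcal{S}|\geq(k-1)^{2}-R$, i.e.\ $a'\leq R$. By Lemma~\ref{upperbounds} (with $k_{\mathcal{S}}=k-1$) we have $|\mathcal{S}|\leq k_{\mathcal{S}}k-k+1=(k-1)^{2}$, so $a'\geq0$. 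Put $f(x)=\frac{x(x-1)}{k-1-x}$; it vanishes at $x=0,1$, is increasing on $[1,k-1)$, and $f(x)\to\infty$ as $x\to(k-1)^{-}$. The endpoints of the relevant interval are $\rho_{c}=\frac{1-c+\sqrt{(c-1)^{2}+4c(k-1)}}{2}$ and $\rho_{c+1}=\frac{-c+\sqrt{c^{2}+4(c+1)(k-1)}}{2}$, and a direct check gives $f(\rho_{c})=c$, $f(\rho_{c+1})=c+1$, $\rho_{1}=\sqrt{k-1}$; in particular $\rho_{c+1}<k-1$, so in both cases of the hypothesis $a'\leq R<k-1$.

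First I would pin down $|\mathcal{P}'|$. Since $a'<k-1$, Lemma~\ref{k-1punt} applies: setting $b=|\mathcal{P}'|-k(k-1)$ we get $0\leq b\leq f(a')$. As $a'$ is a non-negative integer with $a'\leq R$, the monotonicity of $f$ gives $b\leq f(a')\leq f(R)<c+1$ (and $b=0$ in the case $R<\sqrt{k-1}$, where one sets $c=0$), hence $b\leq c$. Now Corollary~\ref{mainlemmaR} bounds $|\mathcal{S}|$ by the maximum of the two bracketed expressions; call them $T_{1}(b,R,k)$ and $T_{2}(b,R,k)$. It suffices to prove $T_{1}<(k-1)^{2}-R=r$ and $T_{2}<r$, contradicting $|\mathcal{S}|\geq r$. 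Both $T_{1}$ and $T_{2}$ are increasing in $b$, so it is enough to treat $b=c$.

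For $T_{2}<r$: clearing the denominator $k(k-2)$ rearranges the inequality to $R\,(2k^{2}-3k+b)+b^{2}+b(k^{2}-2)<k(k-1)(k-2)^{2}$; since $R<k-1$ and $0\leq b\leq c\leq C_{k}$, the left-hand side is $O(k^{7/2})$ while the right-hand side grows like $k^{4}$, so this holds for $k\geq14$. For $T_{1}<r$: clearing the denominator $k(k-1)(k-2)$ and expanding turns it into $2(k-1)R^{2}+\beta R+\gamma<0$, and one computes $-\beta=k^{3}-7k^{2}+10k-2bk-2$ while the discriminant equals $D(b,k)=(k^{3}-3k^{2}-2bk+6k-2)^{2}-8k(k-1)(b-1)(b-2)$; hence $T_{1}<r$ holds precisely when $R$ lies strictly between the two roots $\frac{k^{3}-7k^{2}+10k-2bk-2\mp\sqrt{D(b,k)}}{4(k-1)}$. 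The lower root is exactly the left-hand side of Lemma~\ref{berekening14}: in the case $c\geq1$ that lemma gives lower root $<\rho_{c}\leq R$, and in the case $R<\sqrt{k-1}$ (so $b=0$) it gives lower root $<0\leq R$. For the upper root, dropping $\sqrt{D(b,k)}\geq0$ and using $b\leq c\leq C_{k}$ (so the numerator is positive for $k\geq14$) yields upper root $\geq\frac{k^{3}-7k^{2}+10k-2bk-2}{4(k-1)}\geq\rho_{c+1}>R$, the middle step being an elementary estimate valid for $1\leq c\leq C_{k}$, $k\geq14$ (take $b=c$ and square away the root). Thus $R$ is strictly between the roots, $T_{1}<r$, and we reach a contradiction, proving $|\mathcal{S}|<(k-1)^{2}-R$.

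The main obstacle is the bookkeeping in the last paragraph: checking that clearing denominators in $T_{1}<r$ produces exactly the quadratic whose discriminant is the function $D(b,k)$ of Lemma~\ref{berekening14} and whose smaller root is the left-hand side there, and verifying that the two elementary inequalities (for $T_{2}$, and for the upper root versus $\rho_{c+1}$) really hold over the whole range $k\geq14$, $0\leq b\leq c\leq C_{k}$ — it is precisely to keep the latter valid that $C_{k}$ is chosen as it is, and it is precisely the transition at $k=14$ in Lemma~\ref{berekening14} that forces the separate treatment of $4\leq k\leq13$ in Lemma~\ref{interval4}. Beyond this computation, the argument is just the combination of Lemmas~\ref{upperbounds}, \ref{k-1punt}, Corollary~\ref{mainlemmaR} and Lemma~\ref{berekening14}.
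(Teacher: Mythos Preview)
Your proposal is correct and follows essentially the same route as the paper: bound $|\mathcal{P}'|$ via Lemma~\ref{k-1punt}, feed this into Corollary~\ref{mainlemmaR}, and then show both terms of the maximum are below $r$ by reducing $T_{1}<r$ to the quadratic in $R$ with discriminant $D(b,k)$ and invoking Lemma~\ref{berekening14} for the lower root and the elementary estimate $\rho_{c+1}<\frac{k^{3}-7k^{2}+10k-2ck-2}{4(k-1)}$ for the upper root. Your explicit contradiction hypothesis $a'\leq R$ and the monotonicity-in-$b$ reduction to $b=c$ are minor presentational differences that in fact make precise a step the paper leaves implicit (namely why ``from Lemma~\ref{k-1punt} it follows that $|\mathcal{P}'|\leq k(k-1)+c$'').
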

\begin{proof}
  Denote the interval $\left[\frac{1-c+\sqrt{(c-1)^{2}+4c(k-1)}}{2},\frac{-c+\sqrt{c^{2}+4(c+1)(k-1)}}{2}\right[$ by $I_{c}$, $c\in\N$ and $1\leq c\leq \frac{4}{3}k\sqrt{k}-2k-2\sqrt{k}:=C_{k}$, and the interval $\left[0,\sqrt{k-1}\right[$ by $I_{0}$. Recall the notation $\mathcal{P}'$. We assume that $R\in I_{c}$. From Lemma \ref{k-1punt}, it follows that $|\mathcal{P}'|\leq k(k-1)+c$. Hence, by Corollary \ref{mainlemmaR},
  \begin{multline*}
    |\mathcal{S}|\leq \max\left\{k^{2}-k+1-2\frac{(k^{2}-3k+1-R)(k+R)}{k(k-2)}+\frac{b(b-1)}{(k-1)(k-2)}+2\frac{(b-1)(k-1+R)}{(k-1)(k-2)},\right.\\\left.k-1+R+\frac{R}{k-2}+\frac{b(b+k^{2}+R-2)}{k(k-2)}\right\}\;,
  \end{multline*}
  with $b=k(k-1)-|\mathcal{P}'|$, hence $0\leq b\leq c$. Since $c\leq C_{k}$ and $R<k-2$, the inequality
  \[
    k-1+R+\frac{R}{k-2}+\frac{b(b+k^{2}+R-2)}{k(k-2)}<(k-1)^{2}-R
  \]
  clearly holds in all cases. Now, we consider the inequality
  \begin{align*}
    (k-1)^{2}-R&>k^{2}-k+1-2\frac{(k^{2}-3k+1-R)(k+R)}{k(k-2)}+\frac{b(b-1)}{(k-1)(k-2)}+2\frac{(b-1)(k-1+R)}{(k-1)(k-2)}\\
    \Leftrightarrow\qquad 0&>k+R-2\frac{(k^{2}-3k+1-R)(k+R)}{k(k-2)}+\frac{b(b-1)}{(k-1)(k-2)}+2\frac{(b-1)(k-1+R)}{(k-1)(k-2)}\;.
  \end{align*}
  This inequality is valid if and only if
  \begin{equation}\label{vglRb}
    \frac{k^{3}-7k^{2}+10k-2bk-2-\sqrt{D(b,k)}}{4(k-1)}<R<\frac{k^{3}-7k^{2}+10k-2bk-2+\sqrt{D(b,k)}}{4(k-1)}\;,
  \end{equation}
  with $D(b,k)=(k^{3}-3k^{2}-2bk+6k-2)^{2}-8k(k-1)(b-1)(b-2)$. The double inequality in \eqref{vglRb} should hold for all $b$, with $0\leq b\leq c$. Now,
  \begin{multline*}
    R<\frac{-c+\sqrt{c^{2}+4(c+1)(k-1)}}{2}\qquad\text{ and}\\\frac{k^{3}-7k^{2}+10k-2ck-2}{4(k-1)}\leq\frac{k^{3}-7k^{2}+10k-2bk-2+\sqrt{D(b,k)}}{4(k-1)}\;,
  \end{multline*}
  but the inequality $\frac{-c+\sqrt{c^{2}+4(c+1)(k-1)}}{2}<\frac{k^{3}-7k^{2}+10k-2ck-2}{4(k-1)}$ holds for all $0\leq c\leq C_{k}$ since $k\geq14$. Hence, the right inequality in \eqref{vglRb} always holds. Using
  \[
    R\geq\frac{1-c+\sqrt{(c-1)^{2}+4c(k-1)}}{2}
  \]
  and Lemma \ref{berekening14}, also the left inequality in \eqref{vglRb} follows. This finishes the proof.
\end{proof}

\begin{theorem}\label{maintheorem}
  Let $\mathcal{D}$ be a $2-(v,k,1)$ design, $k\geq4$, and let $\mathcal{S}$ be an Erd\H{o}s-Ko-Rado set on $\mathcal{D}$. If $k^{2}-k\geq r=\frac{v-1}{k-1}\geq k^{2}-3k+\frac{3}{4}\sqrt{k}+2$, then $|\mathcal{S}|\leq r$. If $(r,k)\neq(8,4)$, equality is obtained if and only if $\mathcal{S}$ is a point-pencil. 
\end{theorem}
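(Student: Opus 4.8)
The plan is to reduce to the case where $\mathcal{S}$ is maximal and not a point-pencil, and then to show that $|\mathcal{S}|<r$ in the stated range of $r$. By Lemma~\ref{upperbounds}, such an $\mathcal{S}$ has $k_{\mathcal{S}}\leq k$, and by Lemma~\ref{kpunt} the case $k_{\mathcal{S}}=k$ forces $|\mathcal{P}'|=k^{2}-k+1$; feeding $b=1$ into Corollary~\ref{mainlemmaR} (equivalently Lemma~\ref{mainlemma}) with $r\geq k^{2}-3k+\frac{3}{4}\sqrt{k}+2$, i.e. $R=(k-1)^{2}-r\leq k-\frac{3}{4}\sqrt{k}-1$, should give $|\mathcal{S}|\leq k^{2}-k+1<r$ except for a short list of small parameter pairs which I would check by hand; the pair $(r,k)=(8,4)$, corresponding to $\PG(2,3)$-like behaviour, is exactly the genuine exception one expects from the theorem statement, so it is set aside.

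Next I would treat the decisive case $k_{\mathcal{S}}=k-1$. Here the two preparatory lemmas do all the work: Lemma~\ref{interval4} covers $4\leq k\leq 13$ with $0\leq R\leq R_{k}$ (the table values), and Lemma~\ref{interval14} covers $k\geq14$ provided $R$ lies in one of the intervals $I_{0}=[0,\sqrt{k-1})$ or $I_{c}$ for $1\leq c\leq\frac{4}{3}k\sqrt{k}-2k-2\sqrt{k}$. So the key bookkeeping step is to verify that the hypothesis $r\geq k^{2}-3k+\frac{3}{4}\sqrt{k}+2$, i.e. $0\leq R\leq k-\frac{3}{4}\sqrt{k}-1$, is entirely contained in the union of these intervals: for $k\geq14$ one checks that the intervals $I_{c}$ are consecutive and cover $[\sqrt{k-1},\,k-\frac{3}{4}\sqrt{k}-1]$ once $c$ ranges over $1,\dots,\lceil$something$\rceil$, and that the largest relevant $c$ stays below $\frac{4}{3}k\sqrt{k}-2k-2\sqrt{k}$; for $4\leq k\leq13$ one checks numerically that $k-\frac{3}{4}\sqrt{k}-1\leq R_{k}$. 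In every case one concludes $|\mathcal{S}|<(k-1)^{2}-R=r$.

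Finally, for the remaining values $1\leq k_{\mathcal{S}}\leq k-2$, Lemma~\ref{upperbounds} gives $|\mathcal{S}|\leq k_{\mathcal{S}}k-k+1\leq (k-2)k-k+1=k^{2}-3k+1$, and since $r\geq k^{2}-3k+\frac{3}{4}\sqrt{k}+2>k^{2}-3k+1$ this is already strictly less than $r$. Assembling the three cases, any maximal non-point-pencil $\mathcal{S}$ satisfies $|\mathcal{S}|<r$ (outside $(r,k)=(8,4)$), so an arbitrary $\mathcal{S}$ of size $r$ must be a point-pencil, and of course $|\mathcal{S}|\leq r$ always; the point-pencil is maximal since $r>k$ in this range, proving the equality case.

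\textbf{Main obstacle.} The genuinely delicate step is the interval-covering argument in the $k_{\mathcal{S}}=k-1$ case: one must confirm that the ad hoc intervals $I_{c}$ of Lemma~\ref{interval14} really do tile $[0,\,k-\frac{3}{4}\sqrt{k}-1]$ without gaps for every $k\geq14$, and separately that the small-$k$ table $R_{k}$ is large enough — together with isolating the bound $\frac{3}{4}\sqrt{k}$ as precisely what makes $k-\frac{3}{4}\sqrt{k}-1$ fit under $R_{k}$ for $4\leq k\leq13$. This is where the seemingly arbitrary constant $\frac{3}{4}$ in the hypothesis is pinned down, and it requires carefully matching the endpoints $\frac{1-c+\sqrt{(c-1)^{2}+4c(k-1)}}{2}$ and $\frac{-c+\sqrt{c^{2}+4(c+1)(k-1)}}{2}$ of successive intervals; the rest is routine substitution into Lemma~\ref{mainlemma} and case-checking of the finitely many exceptional small parameters.
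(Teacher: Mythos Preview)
Your plan follows the paper's proof essentially verbatim: reduce to a maximal non-pencil, split on $k_{\mathcal{S}}$, and invoke Lemmas~\ref{interval4}/\ref{interval14} for $k_{\mathcal{S}}=k-1$, Lemma~\ref{kpunt} plus Lemma~\ref{mainlemma} for $k_{\mathcal{S}}=k$, and Lemma~\ref{upperbounds} for $k_{\mathcal{S}}\leq k-2$. Your identification of the interval-covering as the delicate point is also right; the intervals $I_{c}$ are in fact consecutive (the right endpoint of $I_{c}$ equals the left endpoint of $I_{c+1}$), and for $4\leq k\leq 13$ one simply notes that $R$ is an integer, so $R\leq k-\frac{3}{4}\sqrt{k}-1$ forces $R\leq\lfloor k-\frac{3}{4}\sqrt{k}-1\rfloor=R_{k}$.

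There are two genuine slips to repair. First, in the $k_{\mathcal{S}}=k$ case you write ``should give $|\mathcal{S}|\leq k^{2}-k+1<r$'', but in the range at hand $r\leq k^{2}-k<k^{2}-k+1$, so that inequality is false and the bound $k^{2}-k+1$ is useless. What Lemma~\ref{mainlemma} with $b=1$ actually yields is
\[
|\mathcal{S}|\leq\max\left\{k^{2}-k+1-\frac{2(r-k)(k^{2}-k+1-r)}{k(k-2)},\ k^{2}-r-\frac{r-1}{k-2}+\frac{2k(k-1)-r}{k(k-2)}\right\},
\]
and each term must be checked separately against $r$; the first inequality is equivalent to $\frac{k^{2}}{2}<r<k^{2}-k+1$, which holds under the hypothesis for $k\geq 5$, while $k=4$ is done by hand and produces exactly the exceptional equality at $(r,k)=(8,4)$. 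Second, in the $k_{\mathcal{S}}=k-1$ case your claim ``$0\leq R$'' does not follow from the theorem's hypotheses: since $r$ may be as large as $k^{2}-k$, one only has $R\geq 1-k$. The paper first uses Lemma~\ref{upperbounds} to get $|\mathcal{S}|\leq(k-1)^{2}$, disposing of the subcase $r>(k-1)^{2}$ (i.e.\ $R<0$) immediately, and only then assumes $R\geq 0$ before invoking the interval lemmas.
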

\begin{proof}
  Without loss of generality, we can assume that $\mathcal{S}$ is a maximal Erd\H{o}s-Ko-Rado set. Recall the notation $k_{\mathcal{S}}$. If $\mathcal{S}$ is a point-pencil, then $|\mathcal{S}|=r$. So, from now on, we can assume that $\mathcal{S}$ is not a point-pencil. By Lemma \ref{upperbounds} we know that $k_{\mathcal{S}}\leq k$. We distinguish between three cases.
  \begin{itemize}
    \item If $k_{\mathcal{S}}=k-1$, then $|\mathcal{S}|\leq k^2-2k+1$ by Lemma \ref{upperbounds}. In this case, if $k^{2}-2k+1<r\leq k^{2}-k$, the theorem clearly holds, so we assume $r\leq k^{2}-2k+1$. As before, we denote $R=(k-1)^{2}-r$. First, assume that $k\geq14$. In this case, $0\leq R\leq k-\frac{3}{4}\sqrt{k}-1$. So, $0\leq R<\sqrt{k-1}$ or there is a value $c\in\N$, with $1\leq c\leq \frac{4}{3}k\sqrt{k}-2k-2\sqrt{k}$, such that $\frac{1-c+\sqrt{(c-1)^{2}+4c(k-1)}}{2}\leq R<\frac{-c+\sqrt{c^{2}+4(c+1)(k-1)}}{2}$. Applying Lemma \ref{interval14} we find that $|\mathcal{S}|<(k-1)^{2}-R=r$.
    \par Now assume that $4\leq k\leq 13$. In this case, $0\leq R\leq R_{k}=\left\lfloor k-\frac{3}{4}\sqrt{k}-1\right\rfloor$. Applying Lemma \ref{interval4}, we find that $|\mathcal{S}|<(k-1)^{2}-R=r$.
    \item If $k_{\mathcal{S}}=k$, then $|\mathcal{P}'|=k^{2}-k+1$ by Lemma \ref{kpunt}. So, we can apply Lemma \ref{mainlemma} with $b=1$. We find that
    \[
      |\mathcal{S}|\leq\max\left\{k^{2}-k+1-\frac{2(r-k)(k^{2}-k+1-r)}{k(k-2)},k^{2}-r-\frac{r-1}{k-2}+\frac{2k(k-1)-r}{k(k-2)}\right\}\;.
    \]
    The inequality $k^{2}-k+1-\frac{2(r-k)(k^{2}-k+1-r)}{k(k-2)}<r$ holds if and only if $\frac{k^{2}}{2}<r<k^{2}-k+1$. If $k\geq5$, this condition is fulfilled since $k^{2}-k+1>k^{2}-k$ and $\frac{k^{2}}{2}<k^{2}-3k+\frac{3}{4}\sqrt{k}+2$. If $k=4$ and $R=0$, hence $r=9$, then $k^{2}-k+1-\frac{2(r-k)(k^{2}-k+1-r)}{k(k-2)}=8<r$; if $k=4$ and $R=1$, hence $r=8$, then $k^{2}-k+1-\frac{2(r-k)(k^{2}-k+1-r)}{k(k-2)}=8=r$.
    \par Since $k^{2}-3k+\frac{3}{4}\sqrt{k}+2>\frac{k^{2}}{2}-\frac{k}{4}+\frac{3}{8}$ for all $k\geq4$, the inequality $k^{2}-r-\frac{r-1}{k-2}+\frac{2k(k-1)-r}{k(k-2)}<r$ is fulfilled in all cases.
    \item If $k_{\mathcal{S}}\leq k-2$, then $|\mathcal{S}|\leq k^2-3k+1$ by Lemma \ref{upperbounds}. Clearly, $k^2-3k+1<k^{2}-3k+\frac{3}{4}\sqrt{k}+2\leq r$.
  \end{itemize}
  Hence, for $k\geq5$, in all three cases $|\mathcal{S}|<r$; for $k=4$, in all three cases $|\mathcal{S}|\leq r$ and moreover $|\mathcal{S}|< r$ if $r\neq8$. The theorem follows.
\end{proof}

We now summarize the results of this section.

\begin{corollary}\label{overview}
  Let $\mathcal{D}$ be a $2-(v,k,1)$ design, $k\geq4$, with $r=\frac{v-1}{k-1}\geq k^{2}-3k+\frac{3}{4}\sqrt{k}+2$, and let $\mathcal{S}$ be an Erd\H{o}s-Ko-Rado set on $\mathcal{D}$. Then $|\mathcal{S}|\leq r$. If $r\neq k^{2}-k+1$ and $(r,k)\neq(8,4)$, then $|\mathcal{S}|=r$ if and only if $\mathcal{S}$ is a point-pencil. 
\end{corollary}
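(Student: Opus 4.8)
The plan is to assemble Corollary \ref{overview} directly from the two classification theorems of this section, Theorem \ref{rands} and Theorem \ref{maintheorem}, by a straightforward case split on the value of $r$. First I would fix a $2-(v,k,1)$ design $\mathcal{D}$ with $k\geq4$ and $r=\frac{v-1}{k-1}\geq k^{2}-3k+\frac{3}{4}\sqrt{k}+2$, and an Erd\H{o}s-Ko-Rado set $\mathcal{S}$ on it. Since $k\geq4$, one checks $k^{2}-3k+\frac{3}{4}\sqrt{k}+2>k$, so $r>k$ and hence a point-pencil is a maximal Erd\H{o}s-Ko-Rado set; in particular $r$ is an attainable size. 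The bound $|\mathcal{S}|\leq r$ then splits according to whether $r\geq k^{2}-k+1$ or $r\leq k^{2}-k$: in the former range Theorem \ref{rands} gives $|\mathcal{S}|\leq r$, and in the latter range (where the hypothesis $r\geq k^{2}-3k+\frac{3}{4}\sqrt{k}+2$ is exactly what Theorem \ref{maintheorem} requires) Theorem \ref{maintheorem} gives $|\mathcal{S}|\leq r$. Together these cover all $r\geq k^{2}-3k+\frac{3}{4}\sqrt{k}+2$, establishing the first assertion.

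For the equality statement, assume in addition $r\neq k^{2}-k+1$ and $(r,k)\neq(8,4)$, and suppose $|\mathcal{S}|=r$. If $r>k^{2}-k+1$, then by the combination $r\neq k^{2}-k+1$ with $r\geq k^{2}-k+1$ we have $r>k^{2}-k+1$ strictly, so the second part of Theorem \ref{rands} applies verbatim and forces $\mathcal{S}$ to be a point-pencil. If instead $r\leq k^{2}-k$, then $r$ lies in the interval $[k^{2}-3k+\frac{3}{4}\sqrt{k}+2,\,k^{2}-k]$ handled by Theorem \ref{maintheorem}; since $(r,k)\neq(8,4)$, the second sentence of that theorem says equality $|\mathcal{S}|=r$ is attained only by point-pencils, which is exactly what we need. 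The only value of $r$ in $[k^{2}-3k+\frac{3}{4}\sqrt{k}+2,\infty)$ not covered by one of these two regimes is $r=k^{2}-k+1$ itself, which is precisely the case excluded in the hypothesis of the equality statement; this is why the exclusion is necessary, and it matches the remark in the text that $v=k^{3}-2k^{2}+2k$, i.e. $r=k^{2}-k+1$, is a genuine exception realized by the points and lines of $\PG(3,k-1)$.

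There is essentially no obstacle here: the corollary is a packaging of the two preceding theorems, and the only point requiring the tiniest bit of care is confirming that the two $r$-ranges $\{r\geq k^{2}-k+1\}$ and $\{k^{2}-3k+\frac{3}{4}\sqrt{k}+2\leq r\leq k^{2}-k\}$ overlap enough — in fact their union is all of $\{r\geq k^{2}-3k+\frac{3}{4}\sqrt{k}+2\}$ — and that the excluded value $r=k^{2}-k+1$ lies at the boundary between them. I would also note explicitly, for completeness, that the bound $k^{2}-3k+\frac{3}{4}\sqrt{k}+2>k$ used to guarantee $r>k$ is elementary: writing $t=\sqrt{k}\geq2$, the inequality $t^{4}-4t^{2}+\frac{3}{4}t+2>0$ holds for all $t\geq2$, which one verifies at once. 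With these remarks in place the corollary follows immediately, so I would present it as a short two-case deduction rather than as an independent argument.
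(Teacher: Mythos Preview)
Your proposal is correct and takes essentially the same approach as the paper: the paper's proof is the single sentence ``This follows immediately from Theorem \ref{rands} and Theorem \ref{maintheorem},'' and you have simply spelled out the obvious case split on $r\geq k^{2}-k+1$ versus $k^{2}-3k+\frac{3}{4}\sqrt{k}+2\leq r\leq k^{2}-k$ that underlies that sentence. Your extra remarks about $r>k$ and the attainability of $r$ are harmless additions, not differences in method.
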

\begin{proof}
  This follows immediately from Theorem \ref{rands} and Theorem \ref{maintheorem}.
\end{proof}

\section{Maximal \texorpdfstring{Erd\H{o}s}{Erdos}-Ko-Rado sets in unitals}

The results from Lemma \ref{upperbounds}, Lemma \ref{mainlemma}, Lemma \ref{kpunt} and Lemma \ref{k-1punt} can also be used in a different way. For a fixed class of designs, with $v$ (or equivalently $r$) a function of $k$, an upper bound on the size of the largest maximal Erd\H{o}s-Ko-Rado set different from a point-pencil can be computed. We show this for the unitals. Recall that a $2-(q^{3}+1,q+1,1)$ design is a unital of order $q$. First we state Lemma \ref{mainlemma} for a unital of order $q$.

\begin{lemma}\label{mainlemmaU}
  Let $\mathcal{U}$ be a unital of order $q$ and let $\mathcal{S}$ be an Erd\H{o}s-Ko-Rado set on $\mathcal{U}$ such that $|\mathcal{P}'|=q(q+1)+b$, whereby $\mathcal{P}'$ is the set of points covered by the elements of $\mathcal{S}$. Then
  \[
    |\mathcal{S}|\leq \max\left\{q^{2}-q+1+\frac{b(b-1)}{q(q-1)}+\frac{2b}{q-1},q+\frac{bq(q+2)}{q^{2}-1}+\frac{b(b-1)}{q^{2}-1}\right\}\;.
  \]
\end{lemma}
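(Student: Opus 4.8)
The plan is to specialise Lemma \ref{mainlemma} to the parameters of a unital of order $q$, which is a $2-(q^{3}+1,q+1,1)$ design. Here the block size is $k=q+1$ and the replication number is $r=\frac{v-1}{k-1}=\frac{q^{3}}{q}=q^{2}$. So the substitution to carry out is simply $k\mapsto q+1$ and $r\mapsto q^{2}$ throughout the two-term maximum appearing in Lemma \ref{mainlemma}. Since Lemma \ref{mainlemma} requires $k\geq3$, i.e. $q\geq2$, this is exactly the range in which a unital of order $q$ makes sense, so there is no extra hypothesis to check.

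First I would handle the first term. In Lemma \ref{mainlemma} it reads
\[
  k^{2}-k+1-2\frac{(r-k)(k^{2}-k+1-r)}{k(k-2)}+\frac{b(b-1)}{(k-1)(k-2)}+2\frac{(b-1)(k^{2}-k-r)}{(k-1)(k-2)}\;.
\]
Substituting $k=q+1$ gives $k^{2}-k+1=q^{2}+q+1$, $k-2=q-1$, $(k-1)(k-2)=q(q-1)$, and $r-k=q^{2}-q-1$, while $k^{2}-k+1-r=q+1$ and $k^{2}-k-r=q$. Thus the middle correction term becomes $-2\frac{(q^{2}-q-1)(q+1)}{(q+1)(q-1)}=-2\frac{q^{2}-q-1}{q-1}$, and the last term becomes $2\frac{(b-1)q}{q(q-1)}=\frac{2(b-1)}{q-1}$. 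Collecting $q^{2}+q+1-2\frac{q^{2}-q-1}{q-1}+\frac{2(b-1)}{q-1}$ and simplifying the $q$-only part — note $q^{2}+q+1-\frac{2q^{2}-2q-2}{q-1}=\frac{(q^{2}+q+1)(q-1)-(2q^{2}-2q-2)}{q-1}=\frac{q^{3}-3q^{2}+3q-1+2}{\,q-1\,}$, wait, more carefully $(q^{2}+q+1)(q-1)=q^{3}-1$, so this is $\frac{q^{3}-1-2q^{2}+2q+2}{q-1}=\frac{q^{3}-2q^{2}+2q+1}{q-1}=q^{2}-q+1$ since $(q^{2}-q+1)(q-1)=q^{3}-2q^{2}+2q-1$; hmm, that leaves a discrepancy of $2$, giving $q^{2}-q+1+\frac{2}{q-1}$, and combining with $\frac{2(b-1)}{q-1}$ yields $q^{2}-q+1+\frac{2b}{q-1}$. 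Finally the $\frac{b(b-1)}{(k-1)(k-2)}$ term is $\frac{b(b-1)}{q(q-1)}$, so the first term equals $q^{2}-q+1+\frac{b(b-1)}{q(q-1)}+\frac{2b}{q-1}$, exactly as claimed.

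Then I would do the same for the second term,
\[
  k^{2}-r-\frac{r-1}{k-2}+\frac{b(b-1-r+2k(k-1))}{k(k-2)}\;,
\]
with $k^{2}=(q+1)^{2}=q^{2}+2q+1$, $r=q^{2}$, $k-2=q-1$, $k(k-2)=(q+1)(q-1)=q^{2}-1$, $r-1=q^{2}-1$, and $2k(k-1)=2q(q+1)$. So $k^{2}-r=2q+1$, $\frac{r-1}{k-2}=\frac{q^{2}-1}{q-1}=q+1$, leaving $2q+1-(q+1)=q$; and $-r+2k(k-1)=-q^{2}+2q^{2}+2q=q^{2}+2q=q(q+2)$, so the last term is $\frac{b(b-1+q(q+2))}{q^{2}-1}=\frac{bq(q+2)}{q^{2}-1}+\frac{b(b-1)}{q^{2}-1}$. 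Hence the second term is $q+\frac{bq(q+2)}{q^{2}-1}+\frac{b(b-1)}{q^{2}-1}$, matching the statement. The proof is therefore just an application of Lemma \ref{mainlemma} followed by these routine simplifications; the only mild obstacle is bookkeeping care in the algebra (in particular correctly tracking the constant $+2/(q-1)$ contribution that merges with the $b$-term), but there is no conceptual difficulty and no case distinction beyond the one already built into the maximum.
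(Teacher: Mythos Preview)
Your proposal is correct and follows exactly the paper's approach: the paper presents Lemma~\ref{mainlemmaU} simply as Lemma~\ref{mainlemma} specialised to the unital parameters $k=q+1$, $r=q^{2}$, without even writing out the algebra. Your substitutions and simplifications are all accurate (including the absorption of the stray $+2/(q-1)$ into the $2b/(q-1)$ term), so there is nothing to add beyond perhaps tidying the stream-of-consciousness passages into a clean computation.
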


\begin{lemma}\label{q+1puntgrens}
  Let $\mathcal{U}$ be a unital of order $q$ and let $\mathcal{S}$ be an Erd\H{o}s-Ko-Rado set on $\mathcal{U}$   with $k_{\mathcal{S}}=q+1$. If $q\geq4$, then $|\mathcal{S}|\leq q^{2}-q+1$. If $q=3$, then $|\mathcal{S}|\leq8$.
\end{lemma}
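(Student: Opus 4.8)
The plan is to derive the lemma as a direct corollary of Lemma \ref{kpunt} and Lemma \ref{mainlemmaU}, by pinning down the single value of the parameter $b$ that can occur. First I would record the parameters of a unital of order $q$: the block size is $k=q+1$ and the replication number is $r=\frac{(q^{3}+1)-1}{(q+1)-1}=q^{2}$. Since $k_{\mathcal{S}}=q+1=k$, Lemma \ref{kpunt} applies and gives $|\mathcal{P}'|=k^{2}-k+1=q^{2}+q+1=q(q+1)+1$; hence, in the notation of Lemma \ref{mainlemmaU}, we necessarily have $b=1$.

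The next step is simply to substitute $b=1$ into Lemma \ref{mainlemmaU}. The terms $\frac{b(b-1)}{q(q-1)}$ and $\frac{b(b-1)}{q^{2}-1}$ vanish, so the bound becomes
\[
  |\mathcal{S}|\leq \max\left\{\,q^{2}-q+1+\frac{2}{q-1},\; q+\frac{q(q+2)}{q^{2}-1}\,\right\}\;.
\]
For $q\geq4$ the first quantity satisfies $q^{2}-q+1<q^{2}-q+1+\frac{2}{q-1}<q^{2}-q+2$, so, since $|\mathcal{S}|$ is an integer, this already forces $|\mathcal{S}|\leq q^{2}-q+1$; and a short computation shows the second quantity is $<q+2\leq q^{2}-q+1$ for all $q\geq3$, so the maximum is governed by the first term. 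For $q=3$ the first quantity equals $8$ exactly and the second equals $3+\frac{15}{8}<5$, which yields $|\mathcal{S}|\leq8$.

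I do not expect any real obstacle here, as the statement is essentially a specialisation of the earlier counting lemmas. The only point requiring a little care is that Lemma \ref{mainlemmaU} only delivers $|\mathcal{S}|\leq q^{2}-q+1+\frac{2}{q-1}$, which is strictly bigger than $q^{2}-q+1$; one must invoke the integrality of $|\mathcal{S}|$ — legitimate precisely because $\frac{2}{q-1}<1$ when $q\geq4$ — to sharpen this to the stated clean bound. For $q=3$ that rounding trick is unavailable (there $\frac{2}{q-1}=1$), which is exactly why the case $q=3$ is stated separately with the slightly weaker conclusion $|\mathcal{S}|\leq8$.
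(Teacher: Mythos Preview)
Your proof is correct and follows essentially the same approach as the paper: apply Lemma~\ref{kpunt} to get $|\mathcal{P}'|=q^{2}+q+1$ (so $b=1$), substitute into Lemma~\ref{mainlemmaU}, and read off the bound. The paper's version is more terse, leaving the integrality argument and the comparison of the two terms implicit, but your added detail is entirely in line with what is intended.
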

\begin{proof}
  By Lemma \ref{kpunt} we know that $|\mathcal{P}'|=q^{2}+q+1$. We apply Lemma \ref{mainlemmaU} and we find that $|\mathcal{S}|\leq\max\left\{q^{2}-q+1+\frac{2}{q-1},q+\frac{q(q+2)}{q^{2}-1}\right\}$. The lemma immediately follows.
\end{proof}

\begin{lemma}\label{qpuntgrens}
  Let $\mathcal{U}$ be a unital of order $q$ and let $\mathcal{S}$ be an Erd\H{o}s-Ko-Rado set on $\mathcal{U}$ 
  with $k_{\mathcal{S}}=q$. If $q\geq5$, then $|\mathcal{S}|\leq q^{2}-q+\sqrt[3]{q^{2}}-\frac{2}{3}\sqrt[3]{q}+1$. If $q=3$, then $|\mathcal{S}|\leq7$; if $q=4$, then $|\mathcal{S}|\leq13$.
\end{lemma}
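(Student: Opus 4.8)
The plan is to put the statement into the setting of Lemma \ref{k-1punt} and Lemma \ref{mainlemmaU}. A unital of order $q$ is a $2-(q^3+1,q+1,1)$ design, so $k=q+1$ and $r=q^2$, and the hypothesis $k_{\mathcal{S}}=q$ is exactly the case $k_{\mathcal{S}}=k-1$. Suppose, for contradiction, that $|\mathcal{S}|>q^2-q+\sqrt[3]{q^2}-\frac{2}{3}\sqrt[3]{q}+1$, and set $a'=q^2-|\mathcal{S}|$ and $m=q-1-a'$. By Lemma \ref{upperbounds}, $a'\geq0$; the assumption forces $a'<q-1-\sqrt[3]{q^2}+\frac{2}{3}\sqrt[3]{q}<q=k-1$, so Lemma \ref{k-1punt} applies and gives $|\mathcal{P}'|=q(q+1)+b$ with $0\leq b\leq B:=\frac{a'(a'-1)}{q-a'}$; moreover $m>\sqrt[3]{q^2}-\frac{2}{3}\sqrt[3]{q}$, hence $m\geq2$ when $q\geq5$. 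Writing $f(b)=\frac{b(b-1)}{q(q-1)}+\frac{2b}{q-1}$, so that the first entry of the maximum in Lemma \ref{mainlemmaU} is $q^2-q+1+f(b)$, I aim to show $|\mathcal{S}|\leq q^2-q+1+f(B)$ together with $f(B)<m$, which contradicts $|\mathcal{S}|=q^2-q+1+m$.

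Two algebraic facts drive the argument. First, substituting $B=\frac{a'(a'-1)}{q-a'}$ and using $a'(a'-1)+(2q-1)(q-a')=(q-a')^2+q(q-1)$ gives the closed form
\[
  f(B)=\frac{a'(a'-1)}{q(q-1)}+\frac{a'(a'-1)}{(q-a')^2}.
\]
Second, by Lemma \ref{mainlemmaU}, $|\mathcal{S}|\leq\max\{T_1,T_2\}$ with $T_1=q^2-q+1+f(b)$ and $T_2=q+\frac{bq(q+2)}{q^2-1}+\frac{b(b-1)}{q^2-1}$; both entries are increasing in $b$, so $b$ may be replaced by $B$, and a short computation yields $T_1-T_2=(q-1)^2-\frac{B(q^3-2q+1-B)}{q(q^2-1)}$. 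To see this is positive I bound $B$: from $E(m):=q^2-(2m+3)q+4m+3-m^3<0$ (established below) one gets $B(m+1)=a'(a'-1)<(m-1)(m+1)^2$, so $B<m^2-1$; combined with $Bm<B(m+1)=a'(a'-1)<q^2$ this yields $B^{3/2}<q^2$, that is $B<q^{4/3}$, and then $T_1-T_2>0$ for all $q\geq5$. Hence $|\mathcal{S}|\leq T_1=q^2-q+1+f(B)$.

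It remains to prove $E(m)<0$ and to conclude. Its $m$-derivative $-3m^2-2q+4$ is negative for $q\geq5$, so $E$ is strictly decreasing in $m$; and a direct expansion (set $x=\sqrt[3]{q}$, $m=x^2-\frac{2}{3}x$; the $q^2$-, $q^{5/3}$- and $q^{4/3}$-terms cancel) gives
\[
  E\left(\sqrt[3]{q^2}-\frac{2}{3}\sqrt[3]{q}\right)=-\frac{73}{27}q+4\sqrt[3]{q^2}-\frac{8}{3}\sqrt[3]{q}+3,
\]
which is negative for every $q\geq5$ (negative at $q=5$, decreasing in $q$). Since $m>\sqrt[3]{q^2}-\frac{2}{3}\sqrt[3]{q}$ we get $E(m)<0$; this is equivalent to $\frac{a'(a'-1)}{(q-a')^2}<m-1$, and as $\frac{a'(a'-1)}{q(q-1)}<1$ (because $a'\leq q-1$), the closed form gives $f(B)<m$. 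Therefore $|\mathcal{S}|\leq q^2-q+1+f(B)<q^2-q+1+m=q^2-a'=|\mathcal{S}|$, a contradiction, so the bound holds for $q\geq5$. For $q\in\{3,4\}$, where the estimate on $E$ is unavailable, I would instead run through the few admissible pairs $(a',b)$ by hand — with $q=4$ only $a'\leq2$, so $b\leq1$, and with $q=3$ only $a'\leq1$, so $b=0$ — and read the bound off Lemma \ref{mainlemmaU} directly.

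The step I expect to be most delicate is controlling the maximum in Lemma \ref{mainlemmaU}, i.e.\ verifying that $T_1$ is the relevant entry: this needs both the clean closed form for $f(B)$ and the two-sided bound $B<m^2$ together with $Bm<q^2$ that forces $B<q^{4/3}$. The precise exponent $\frac{2}{3}$ and coefficient $\frac{2}{3}$ in the statement are exactly what make the cubic $E(m)$ change sign in the right place, so getting the expansion of $E(\sqrt[3]{q^2}-\frac{2}{3}\sqrt[3]{q})$ right — in particular the three cancellations that leave a strictly negative remainder for $q\geq5$ — is the heart of the matter.
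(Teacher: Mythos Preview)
Your proposal is correct and follows essentially the same route as the paper: both apply Lemma~\ref{k-1punt} to bound $b$ by $B=\frac{a'(a'-1)}{q-a'}$, feed this into Lemma~\ref{mainlemmaU}, and extract a lower bound on $a'$ equivalent to the cubic inequality you call $E(m)<0$. Your closed form $f(B)=\frac{a'(a'-1)}{q(q-1)}+\frac{a'(a'-1)}{(q-a')^2}$ and the expansion of $E\bigl(q^{2/3}-\tfrac{2}{3}q^{1/3}\bigr)$ are exactly the computations the paper suppresses when it asserts ``for general $q$, it implies $a'\geq q-\sqrt[3]{q^{2}}+\tfrac{2}{3}\sqrt[3]{q}-1$''.

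The one organizational difference is that you spend effort showing $T_1(B)\geq T_2(B)$ (via $B<q^{4/3}$) so as to discard the second term of the maximum. The paper avoids this: it simply analyzes both branches separately, finds that the second branch forces $a'=q-1$, and observes that this is stronger than the bound coming from the first branch, so the first branch governs in the ``or''. That shortcut saves you the $T_1$ versus $T_2$ comparison entirely. Your small-$q$ plan (finitely many $(a',b)$) is also what the paper does, just phrased differently.
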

\begin{proof}
  Denote $q^{2}-|\mathcal{S}|$ by $a'$. We can assume $a'<q$ since otherwise the lemma clearly holds. By Lemma \ref{upperbounds}, we know that $a'\geq0$, and by Lemma \ref{k-1punt} we know that $|\mathcal{P}'|=q^{2}+q+b$, with $0\leq b\leq \frac{a'^{2}-a'}{q-a'}$. We apply Lemma \ref{mainlemmaU} and we find that
    \begin{multline*}
    |\mathcal{S}|\leq q^{2}-q+1+2\frac{a'(a'-1)}{(q-a')(q-1)}+\frac{a'(a'-1)(a'^{2}-q)}{q(q-1)(q-a')^{2}}\\\text{or }\quad|\mathcal{S}|\leq q+\frac{qa'(q+2)(a'-1)}{(q^{2}-1)(q-a')}+\frac{a'(a'-1)(a'^{2}-q)}{(q-a')^{2}(q^{2}-1)}\;.
  \end{multline*}
  Using $|\mathcal{S}|=q^{2}-a'$, the first inequality can be rewritten as
  \[
    q(q-a'-1)(q-a')^{2}(q-1)\leq a'(a'-1)(2q^{2}-2qa'+a'^{2}-q)\;.
  \]
  For $q=3$, this implies $a'\geq 2$ and for $q=4$, this implies $a'\geq 3$. For general $q$, it implies $a'\geq q-\sqrt[3]{q^{2}}+\frac{2}{3}\sqrt[3]{q}-1$.
  \par Now we look at the second inequality. Using $|\mathcal{S}|=q^{2}-a'$, it can be rewritten as
  \[
    (q^{2}-q-a')(q-a')^{2}(q^{2}-1)\leq a'(a'-1)(q^{3}-(a'-2)q^{2}-(2a'+1)q+a'^{2})\;.
  \]
  Using that $0\leq a'<q$, it follows that $a'=q-1$.
  \par Only one of the inequalities needs to hold, but $q-\sqrt[3]{q^{2}}+\frac{2}{3}\sqrt[3]{q}-1\leq q-1$. The lemma follows.
\end{proof}

\begin{theorem}\label{classnonclass}
  Let $\mathcal{U}$ be a unital of order $q$ and let $\mathcal{S}$ be a maximal Erd\H{o}s-Ko-Rado set on $\mathcal{U}$. If $q\geq5$, then either $|\mathcal{S}|=q^{2}$ and $\mathcal{S}$ is a point-pencil, or else $|\mathcal{S}|\leq q^{2}-q+\sqrt[3]{q^{2}}-\frac{2}{3}\sqrt[3]{q}+1$. If $q=4$, then either $|\mathcal{S}|=16=q^{2}$ and $\mathcal{S}$ is a point-pencil, or else $|\mathcal{S}|\leq 13=q^{2}-q+1$. If $q=3$, then either $|\mathcal{S}|=9=q^{2}$ and $\mathcal{S}$ is a point-pencil, or else $|\mathcal{S}|\leq 8$.
\end{theorem}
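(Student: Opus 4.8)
The plan is to argue that a maximal Erd\H{o}s-Ko-Rado set $\mathcal{S}$ on a unital $\mathcal{U}$ of order $q$ has $k_{\mathcal{S}}\leq q+1$ (where $k=q+1$ is the block size), and then simply to combine the case distinction on the value of $k_{\mathcal{S}}$ with the three preceding lemmas. First I would recall from Lemma \ref{upperbounds} that if $\mathcal{S}$ is maximal and not a point-pencil then $k_{\mathcal{S}}\leq k=q+1$, so there are only two remaining possibilities to treat: $k_{\mathcal{S}}=q+1$ and $k_{\mathcal{S}}\leq q$. (If $\mathcal{S}$ is the point-pencil then $|\mathcal{S}|=r=q^{2}$, giving the stated first alternative in each case.)

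Second, the case $k_{\mathcal{S}}=q+1$ is handled directly by Lemma \ref{q+1puntgrens}: it yields $|\mathcal{S}|\leq q^{2}-q+1$ for $q\geq4$ and $|\mathcal{S}|\leq8$ for $q=3$. Note that for $q\geq5$ the bound $q^{2}-q+1$ is $\leq q^{2}-q+\sqrt[3]{q^{2}}-\tfrac{2}{3}\sqrt[3]{q}+1$ since $\sqrt[3]{q^{2}}\geq\tfrac{2}{3}\sqrt[3]{q}$ for $q\geq1$, so this case is absorbed into the general bound. For $q=4$ it gives exactly $13=q^{2}-q+1$, and for $q=3$ it gives $8$, matching the claims.

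Third, the case $k_{\mathcal{S}}\leq q$ requires a small extra observation: Lemma \ref{qpuntgrens} is stated only for $k_{\mathcal{S}}=q$, so I would first remark that if $k_{\mathcal{S}}\leq q-1$ then Lemma \ref{upperbounds} already gives $|\mathcal{S}|\leq k_{\mathcal{S}}k-k+1\leq (q-1)(q+1)-(q+1)+1=q^{2}-q-1$, which is strictly below all the stated bounds; so we may assume $k_{\mathcal{S}}=q$ and invoke Lemma \ref{qpuntgrens} directly. That lemma yields $|\mathcal{S}|\leq q^{2}-q+\sqrt[3]{q^{2}}-\tfrac{2}{3}\sqrt[3]{q}+1$ for $q\geq5$, $|\mathcal{S}|\leq13$ for $q=4$, and $|\mathcal{S}|\leq7$ for $q=3$. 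Taking, in each range of $q$, the maximum of the two sub-case bounds produces exactly the three claimed inequalities, and the proof is complete.

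The argument is essentially a bookkeeping combination of the earlier lemmas, so the only genuine obstacle is a verification rather than an idea: one must check the elementary inequalities $q^{2}-q+1\leq q^{2}-q+\sqrt[3]{q^{2}}-\tfrac{2}{3}\sqrt[3]{q}+1$ (for $q\geq5$) and that the $k_{\mathcal{S}}\leq q-1$ contribution $q^{2}-q-1$ never exceeds the advertised bounds, so that the three lemmas really do cover every maximal $\mathcal{S}$. Beyond that, nothing new is needed; the heavy lifting was already done in Lemma \ref{mainlemmaU} and the derivations of Lemmas \ref{q+1puntgrens} and \ref{qpuntgrens} from it.
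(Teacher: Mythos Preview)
Your proposal is correct and follows essentially the same approach as the paper: distinguish the point-pencil case, use Lemma \ref{upperbounds} to bound $k_{\mathcal{S}}\leq q+1$ and handle $k_{\mathcal{S}}\leq q-1$, then invoke Lemmas \ref{q+1puntgrens} and \ref{qpuntgrens} for $k_{\mathcal{S}}=q+1$ and $k_{\mathcal{S}}=q$ respectively. The only difference is that you spell out the elementary comparison $q^{2}-q+1\leq q^{2}-q+\sqrt[3]{q^{2}}-\tfrac{2}{3}\sqrt[3]{q}+1$ explicitly, whereas the paper leaves it implicit.
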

\begin{proof}
  If $\mathcal{S}$ is a point-pencil, then it contains $q^{2}$ elements. From now on, we assume that $\mathcal{S}$ is not a point-pencil. Recall the definition of $k_{\mathcal{S}}$. By Lemma \ref{upperbounds}, $k_{\mathcal{S}}\leq q+1$. Moreover, if $k_{\mathcal{S}}\leq q-1$, then $|\mathcal{S}|\leq q^{2}-q-1$.
  \par First, we assume $q\geq5$. If $k_{\mathcal{S}}=q$, then $|\mathcal{S}|\leq q^{2}-q+\sqrt[3]{q^{2}}-\frac{2}{3}\sqrt[3]{q}+1$ by Lemma \ref{qpuntgrens}. If $k_{\mathcal{S}}=q+1$, then $|\mathcal{S}|\leq q^{2}-q+1$ by Lemma \ref{q+1puntgrens}.
  \par The results for $q=3,4$ are obtained in the same way, using the results from Lemma \ref{q+1puntgrens} and Lemma \ref{qpuntgrens}.
\end{proof}

\begin{remark}
  Note that these results correspond with the result for classical unitals in Corollary \ref{classclass} since the triangle contains only $q+2$ blocks.
  \par Note that the unitals are not covered by Corollary \ref{partialgeometry}. However, they are covered by Theorem \ref{overview}. So we already knew that the point-pencils are the largest Erd\H{o}s-Ko-Rado sets. The above theorem thus gives a bound on the size of the second-largest maximal Erd\H{o}s-Ko-Rado set.
\end{remark}

\paragraph*{Acknowledgement:}The author wants to thank the anonymous referees for improving the quality of the article. The research of the author is supported by FWO-Vlaanderen (Research Foundation - Flanders).

\end{document}